\setlist{nolistsep}
\def\minentq{{\mathbf H}^{q}_{\infty}}
\def \F{\mathbb{F}}
\def \Z{\mathbb{Z}}
\def \bL{\mathbf{L}}
\def \bH{\mathbf{H}}
\def \bP{\mathbf{Pr}}
\def \PFq[#1]{\mathbf{P}^#1(\mathbb{F}_q)}
\def \PF[#1,#2]{PG(#1,#2)}
\def \eps{\varepsilon}
\declaretheorem[name=Theorem]{theorem}
\newtheorem{lemma}[theorem]{Lemma}
\newtheorem{definition}[theorem]{Definition}
\newtheorem{claim}[theorem]{Claim}
\newcommand\numberthis{\addtocounter{equation}{1}\tag{\theequation}}
\begin{document}

\begin{frontmatter}[classification=text]

\title{Simple Proofs for Furstenberg Sets Over Finite Fields} 

\author[dhar]{Manik Dhar}
\author[dvir]{Zeev Dvir\thanks{Supported by NSF grant DMS-1953807.}}
\author[lund]{Ben Lund\thanks{Supported by NSF postdoctoral fellowship DMS-1802787 and Institute for Basic Science (IBS-R029-C1).}}

\begin{abstract}
    A $(k,m)$-Furstenberg set $S \subset \F_q^n$  over a finite field is a set that has at least $m$ points in common  with a $k$-flat in every direction. The question of determining the smallest size of such sets is a natural generalization of the finite field Kakeya problem. The only previously known bound for these sets is due to Ellenberg-Erman \cite{ellenberg2016furstenberg} and requires sophisticated machinery from algebraic geometry. In this work we give new, completely elementary and simple proofs that significantly improve the known bounds. Our main result relies on an equivalent formulation of the problem using the notion of min-entropy, which could be of independent interest.
\end{abstract}
\end{frontmatter}

\section{Introduction}

	For a prime power $q$, let $\F_q$ be the finite field of order $q$.
	Let $n > k \geq 1$ and $m \geq 1$ be integers.
	A subset $S \subseteq \F_q^n$ is a {\em $(k,m)$-Furstenberg} set if, for each rank $k$ subspace $W$ of $\F_q^n$, there is a translate of $W$ that intersects $S$ in at least $m$ points.
	
	For a prime power $q$ and integers $n,k,$ and $m$ with $1 \leq k < n$ and $m \leq q^k$, let $K(q,n,k,m)$ be the least $t$ such that there exists a $(k,m)$-Furstenberg set in $\F_q^n$ of cardinality $t$.
	
	A $(1,q)$-Furstenberg set is called a Kakeya set.
	The question of determining $K(q,n,1,q)$ was originally posed by Wolff \cite{wolf1999} as a toy version of the Euclidean Kakeya conjecture.
	For this case, the polynomial method \cite{dvir2009size,saraf2008,dvir2013extensions} gives the bound
	
	\begin{equation}\label{eq:kakeya}
		K(q,n,1,q) \geq 2^{-n}q^n,
	\end{equation}
	which is tight up to a factor of 2.
	This was recently improved by Bukh and Chao \cite{bukh}, who proved a bound that is tight up to lower order terms.
	The same techniques also handle the more general case of arbitrary $m$, giving the bound
	
	\begin{equation}\label{eqn:generalMKakeya}
	    K(q,n,1,m) \geq 2^{-n}m^n.
	\end{equation}

	The approach used to prove (\ref{eq:kakeya}) was generalized to all $k$ when $m=q^k$ by Kopparty, Lev, Saraf, and Sudan \cite{kopparty2011kakeya}, who improved earlier work by Ellenberg, Oberlin, and Tao \cite{ellenberg_oberlin_tao_2010}. They show
	
	\begin{equation}\label{eq:fullFurstenberg}
		K(q,n,k,q^k) \geq \left( \frac{q^{k+1}}{q^k + q - 1} \right)^n = \left (1+\frac{q-1}{q^k} \right )^{-n} q^n.
	\end{equation}
	
	For fixed $k \geq 2$, fixed $n$, and $q$ large, (\ref{eq:fullFurstenberg}) states that a $(k,q^k)$-Furstenberg set in $\F_q^n$ must contain nearly all of the points of $\F_q^n$.
	For fixed $k \geq 2$, fixed $q$, and $n$ large, (\ref{eq:fullFurstenberg}) states that a $(k,q^k)$-Furstenberg set in $\F_q^n$ must have size at least $C^{-n} q^n$, for some constant $C>1$ depending on $q$ and $k$.
	
	Kopparty, Lev, Saraf, and Sudan also described several ways to construct small Furstenberg sets when $m=q^k$.
	We include only one of these here. 
	Other constructions described in \cite{kopparty2011kakeya} give better bounds for large $k$, and for some explicit, small values of $q$.
	\begin{equation}\label{eq:furstConstruction}
		K(q,n,k,q^k) \leq \left (1 - \frac{q-3}{2 q^k} \right )^{\lfloor n/(k+1) \rfloor} q^n.
	\end{equation}
	
	Furstenberg sets with $k \geq 2$ and $m < q^k$ are not understood as well.
    The first progress on the general case was by Ellenberg and Erman \cite{ellenberg2016furstenberg}, who used a sophisticated algebraic argument to prove
	
	\begin{equation}\label{eq:generalFurstenberg}
		K(q,n,k,m) \geq C_{n,k}m^{n/k}.
	\end{equation}
	
	Ellenberg and Erman did not explicitly specify the value of $C_{n,k}$ obtained, but a close inspection of the proof shows that it is $C_{n,k} = (1/n)^{\Omega(n \ln(n/k))}$.
	Recent work of the current authors \cite{DDL-1} gives a slightly more streamlined version of the Ellenberg and Erman  proof to obtain (\ref{eq:generalFurstenberg}) with $C_{n,k} = \Omega((1/16)^{n \ln (n/k)})$. 
	
	The contribution of this paper is to improve (\ref{eq:generalFurstenberg}) using much simpler and more elementary arguments. Our first main result deals with the case of general $k$ and $m \leq q^k$:
	
	\begin{theorem}\label{thm:FurstenbergRecurse}
	Let $q$ be a prime power, and let $n,k,$ and $m$ be positive integers such that $m \leq q^k$, then
$$K(q,n,k,m)\ge \frac{1}{2^n} m^{n/k}. $$
	\end{theorem}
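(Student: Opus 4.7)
The target bound $|S| \geq m^{n/k}/2^n$ is equivalent to $m \leq \bigl(2|S|^{1/n}\bigr)^k$. Since the $(k,m)$-Furstenberg hypothesis forces $\max_{a} |S \cap (a+W)| \geq m$ for every $k$-subspace $W$, the theorem reduces to a geometric lemma that does not reference the Furstenberg property at all: \emph{for every finite $S \subseteq \F_q^n$, there exists a $k$-subspace $W$ such that $\max_{a} |S \cap (a+W)| \leq \bigl(2|S|^{1/n}\bigr)^k$.} This is essentially the min-entropy reformulation highlighted in the abstract: writing $X$ for the uniform distribution on $S$, the identity $H_\infty(\pi_W(X)) = \log|S| - \log \max_a|S \cap (a+W)|$ turns the lemma into the statement that some $k$-dimensional quotient projection of $X$ retains min-entropy at least $(1-k/n)\log|S|$ up to an additive $O(k)$.

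The plan to establish this lemma is a probabilistic moment argument. Choose a $k$-subspace $W \subseteq \F_q^n$ uniformly at random and, for an integer $r$ to be chosen, compute
\[
\mathbb{E}_W \sum_{a \in \F_q^n/W} |S \cap (a+W)|^r = \sum_{(x_1,\ldots,x_r) \in S^r} \Pr\bigl[x_i - x_1 \in W \text{ for all } i\bigr],
\]
using the classical estimate that a random $k$-subspace contains a fixed $j$-dimensional subspace with probability about $q^{-j(n-k)}$, so that the sum splits naturally by the dimension of the span $\langle x_2-x_1,\ldots,x_r-x_1\rangle$. Markov's inequality applied to $\sum_a |S \cap (a+W)|^r$ (which upper bounds $\max_a|S \cap (a+W)|^r$) then produces a specific $W$ satisfying $\max_a |S \cap (a+W)| \leq (\mathbb{E})^{1/r}$. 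Combined with the Furstenberg hypothesis this gives $m \leq (\mathbb{E})^{1/r}$, and a careful calibration of $r$ should yield $m \leq \bigl(2|S|^{1/n}\bigr)^k$, hence the theorem.

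The main obstacle is choosing $r$ to extract the sharp constant $2^k$ in the bound. As $r \to \infty$ the dominant term of $(\mathbb{E})^{1/r}$ approaches $|S|/q^{n-k}$ (the average coset size), which is comfortably below the target $\bigl(2|S|^{1/n}\bigr)^k$ in the relevant range $|S| \leq q^n$; but for finite $r$ the subdominant contributions from tuples whose difference span has dimension $j < \min(r-1,k)$ introduce a multiplicative blowup that must be controlled. Tracking these contributions by span dimension $j$ and balancing $r$ against $n$, $k$, and $q$ is the main technical step, though the calculation should remain transparent since the key probability $\Pr[V \subseteq W] \approx q^{-j(n-k)}$ depends on $V$ only through its dimension $j$.
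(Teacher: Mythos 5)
Your target lemma---for every $S\subseteq\F_q^n$ there is a $k$-subspace $W$ with $\max_a|S\cap(a+W)|\le (2|S|^{1/n})^k$---is exactly the paper's min-entropy reformulation $B(n,k)$ with $D_{n,k}=k\log_q 2$, and you've correctly identified that this plus Lemma~3.2 (\texttt{lem-BtoA}) yields the theorem. However, the paper does not prove $B(n,k)$ by averaging over $W$; it proves the $k=1$ case via a vanishing-polynomial argument with multiplicities (the $\ell^n$-norm bound of Theorem~\ref{thm:normBound}) and then bootstraps to general $k$ by composing one-dimensional quotients. Your proposed proof route is a probabilistic moment calculation, and that route has a genuine gap: no choice of $r$ (even one depending on $|S|,n,k,q$) makes the argument go through.

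The difficulty is that the two constraints on $r$ pull in opposite directions and, for large $q$, have no overlap. The diagonal term $j=0$ (tuples with all coordinates equal) contributes exactly $|S|$ to $\mathbb{E}_W\sum_a|S\cap(a+W)|^r$, so $\mathbb{E}^{1/r}\ge|S|^{1/r}$; matching this against the target $2^k|S|^{k/n}$ forces $r\gtrsim n/k$ once $|S|$ is a non-trivial power of $q$. On the other hand, any rich flat substructure of $S$ of intermediate dimension forces $r$ to be \emph{small}. Concretely, take $k=1$, $n\ge 6$ even, $q$ large, and $S=W_0\cup V$ where $W_0$ is a line through the origin and $V$ is an $(n/2)$-dimensional subspace with $W_0\cap V=\{0\}$, so $|S|\approx q^{n/2}$. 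The target bound $2|S|^{1/n}\approx 2q^{1/2}$ is satisfied by any generic direction $W$ (each such line meets $S$ in at most $2$ points). But a random line $W$ lies inside $V$ with probability $\approx q^{-n/2}$, and conditionally on that event $\sum_a|S\cap(a+W)|^r\ge q^{n/2-1}\cdot q^r$ (the $q^{n/2-1}$ lines of $V$ parallel to $W$, each with $q$ points), so
\[
\mathbb{E}_W\sum_a|S\cap(a+W)|^r \;\ge\; q^{-n/2}\cdot q^{n/2-1+r}\;=\;q^{r-1},
\]
hence $\mathbb{E}^{1/r}\ge q^{1-1/r}$. Requiring $q^{1-1/r}\le 2q^{1/2}$ forces $r\le 1/(1/2-\log_q 2)\to 2$ as $q\to\infty$, while the diagonal term forces $r\ge n/(1+2\log_q2)\to n$. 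For $n\ge 6$ and $q$ large these windows are disjoint. Your remark that $\mathbb{E}^{1/r}\to|S|/q^{n-k}$ as $r\to\infty$ is also not correct: a single subspace $W_0$ with $\max_a|S\cap(a+W_0)|=M$ and $\Pr[W=W_0]=p>0$ already gives $\mathbb{E}^{1/r}\ge p^{1/r}M\to M$, so the large-$r$ limit is the worst-case coset size, not the average. This is precisely the obstruction the polynomial method circumvents: it never averages over $W$, but instead finds a low-degree polynomial vanishing on $S$ with prescribed multiplicities and derives the bound from Schwartz--Zippel, which is insensitive to how the ``bad'' directions are distributed.
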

	
	Ellenberg and Erman's method can be used to prove Furstenberg-style bounds involving hypersurfaces that don't follow from the proof of Theorem~\ref{thm:FurstenbergRecurse}. The proof of Theorem~\ref{thm:FurstenbergRecurse} relies on a new equivalent formulation of the problem using the notion of min-entropy. This new formulation, described in Section~\ref{sec:entropy}, allows us to derive the bound for general $k$ using a recursive argument, starting with $k=1$ as a base case (proved using the polynomial method).

	A separate argument gives stronger bounds for large $m$. Let $S$ be any set of $mq^{n-k}$ points in $\F_q^n$. A simple  pigeonholing argument shows that $S$ is a $(k,m)$-Furstenberg set.
	When $m$ is sufficiently large relative to $q$, it turns out that there are no Furstenberg sets much smaller than this trivial construction.
	
	\begin{theorem}\label{th:largeMFurst}
		Let $\eps > 0$, let $q$ be a prime power, and let $n,k,$ and $m$ be integers with $2 \leq k < n$ and $m \leq q^k$.
		If $m \geq 2^{n+7-k}q \eps^{-2}$, then
		\[K(q,n,k,m) \geq (1-\eps)m q^{n-k}.\]
	\end{theorem}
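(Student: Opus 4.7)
The plan is to argue by contradiction via a second-moment estimate on the coset distribution. Assume a $(k,m)$-Furstenberg set $S \subseteq \F_q^n$ satisfies $|S| < (1-\eps)mq^{n-k}$. For each $k$-subspace $W$, let $a_v(W) = |(v+W) \cap S|$ across the $\sigma := q^{n-k}$ cosets of $W$. Then the mean $\mu = |S|/\sigma < (1-\eps)m$, while $\max_v a_v(W) \geq m$ by the Furstenberg property, so the gap between max and mean exceeds $\eps m$. This forces the per-direction variance to satisfy $\mathrm{Var}_v(a_v(W)) \geq (m-\mu)^2/\sigma$ (since at least one of the $\sigma$ cosets achieves a value $\geq m$).

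The next step averages this lower bound over all $k$-subspaces $W$ and compares it to the standard double-count $\sum_W \sum_v a_v(W)^2 = |S|\binom{n}{k}_q + |S|(|S|-1)\binom{n-1}{k-1}_q$, which evaluates the average variance to roughly $|S|/\sigma$. The resulting inequality $\eps^2 m^2 \leq |S|$, combined with $|S| < (1-\eps)m\sigma$, forces $m < q^{n-k}/\eps^2$. This directly proves the theorem whenever $m \geq q^{n-k}/\eps^2$, and in particular settles the base case $n - k = 1$ (i.e., $k = n-1$) for which $q^{n-k}/\eps^2 = q/\eps^2 \leq 2^8 q/\eps^2$ is implied by the hypothesis.

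For $n-k \geq 2$ with $q$ large, the direct Chebyshev bound is weaker than the theorem's hypothesis $m \geq 2^{n+7-k}q\eps^{-2}$, so I would complete the proof by induction on $k$ decreasing from $n-1$ down to $2$. The shape of the constant $2^{n+7-k}$, which doubles at each step, suggests showing in the inductive step that a $(k,m)$-Furstenberg set is also $(k+1,m')$-Furstenberg for some $m' \geq m q /2$, so that applying the inductive hypothesis at level $k+1$ (where $\sigma$ is smaller by a factor of $q$, making the Chebyshev-type bound relatively stronger) yields the desired conclusion with only the advertised factor-of-two loss per level.

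The main technical obstacle is this amplification step. For each $(k+1)$-direction $W'$, we must aggregate the rich $k$-flats (one for each of the $\binom{k+1}{k}_q \sim q^k$ $k$-subspaces of $W'$) across the $q^{n-k-1}$ translates of $W'$, and show that some translate contains $\gtrsim qm$ points of $S$. Plain pigeonhole yields at least $q^{2k+1-n}$ rich $k$-flats in some $(k+1)$-translate, which is at most $1$ when $k < (n-1)/2$, so a more delicate argument is required. The natural route is a second-moment/Cauchy--Schwarz estimate on the distribution of rich $k$-flats across $W'$-translates, combined with inclusion--exclusion to lower bound the union of the rich flats inside a single $(k+1)$-translate, using that distinct $k$-flats in a $(k+1)$-flat meet in a $(k-1)$-flat of size $q^{k-1}$; managing the overlap terms to preserve the factor of $q$ gain is where the bulk of the work lies.
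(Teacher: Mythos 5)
Your base-case computation is correct and clean: for $k=n-1$ the second-moment estimate really does give $\eps^2m^2<|S|$ once $|S|<(1-\eps)mq$, which with $m\geq q/\eps^2$ yields a contradiction, and the theorem's hypothesis at $k=n-1$ implies $m\geq q/\eps^2$.

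The plan for $k<n-1$ has two gaps, one conceded and one not.

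The conceded gap is the amplification lemma itself: the claim that every $(k,m)$-Furstenberg set is $(k+1,m')$-Furstenberg with $m'\geq mq/2$. You correctly observe that when $k<(n-1)/2$ pigeonhole puts fewer than one rich $k$-flat per $W'$-coset, so the rich flats can be spread one to a coset; the inclusion--exclusion calculation you sketch gives $|c\cap S|\gtrsim tm-\binom{t}{2}q^{k-1}$, which is maximized near $t\approx m/q^{k-1}$ and yields $\gtrsim m^2/q^{k-1}$ rather than $\gtrsim mq$ unless $m$ is already close to $q^k$. There is no evidence presented that a $(k,m)$-Furstenberg set with $m$ far below $q^k$ must concentrate $\sim q$ rich $k$-flats in a single $(k+1)$-translate of some fixed $W'$, and constructions of Furstenberg sets as unions of one rich $k$-flat per direction make this look doubtful. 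This is a hole, not a routine verification.

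The unconceded gap is numerical, and it breaks the plan even if the amplification were granted. Suppose you iterate $j$ times, reaching a $(k+j,\,m(q/2)^j)$-Furstenberg set, and then apply either the Chebyshev bound or the inductive hypothesis at level $k+j$. The conclusion you get is
\[|S|\;\geq\;(1-\eps)\,m\Bigl(\tfrac{q}{2}\Bigr)^{j}q^{\,n-k-j}\;=\;(1-\eps)\,mq^{n-k}\,2^{-j},\]
which is smaller than the target $(1-\eps)mq^{n-k}$ by a factor of $2^{j}$. The doubling of $2^{n+7-k}$ as $k$ decreases is a loosening of the \emph{hypothesis}, which absorbs the factor-of-two loss in the \emph{value of} $m'$, but it does nothing to repair the factor-of-two loss in the \emph{conclusion}. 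Only a lossless amplification $m'\geq mq$ would close the induction, and that statement is false: a Kakeya set (the case $k=1$, $m=q$) is not in general $(2,q^2)$-Furstenberg, since it has density $\approx 2^{-n}$ and therefore most $2$-flats meet it in $\approx q^2 2^{-n}$ points. So the shape of the constant is telling you something real: the recursion as proposed cannot recover the stated bound.

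For comparison, the paper goes the other way in dimension. From the family $F$ of one rich $k$-flat per direction it produces a large family of $(k-1)$-flats each contained in some flat of $F$ (Lemma~\ref{th:kakeyaForFlats}, a ``Kakeya-ness of flat families'' statement proved by reduction to the $K(q,n-\ell,k-\ell,q^{k-\ell})$ bound, i.e., the polynomial method), upgrades them to $(S,\delta m/q)$-rich $(k-1)$-flats via the Alon/Lund poor-flats estimate (Lemma~\ref{th:becks}), and then finishes in one shot with the Haemers incidence bound (Lemmas~\ref{th:incidenceBound} and~\ref{th:heavyFlatsCoverManyPoints}). There is no induction over $k$ and no amplification of $m$; the only place the second-moment machinery appears is inside those two lemmas. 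Your base-case computation is in the same spirit as Lemma~\ref{th:heavyFlatsCoverManyPoints}, but the route you take for general $k$ is not the paper's, and as written it does not work.
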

	
	Note that, since $q^k \geq m$, Theorem \ref{th:largeMFurst} never applies if $q^{k-1} < 2^{n+7-k}$.
	
	When $k > n/2$ and $m > q^{n-k}$, we can remove the assumption that the $k$-flats are in different directions and still prove a stronger bound than previously known. The number of rank $k$ subspaces in $\F_q^n$ is given by the $q$-binomial coefficient $\binom{n}{k}_q$ (see Section~\ref{sec:finiteGeometry} for details).
	
	\begin{theorem}\label{th:pureIncidences}
	    Let $q$ be a prime power, and let $n,k,$ and $m$ be integers with $n/2 < k < n$ and $0 \leq m \leq q^k$.
		Let $S \subseteq \F_q^n$.
		Let $L$ be a set of $k$-flats that each contain at least $m$ points of $S$, with $|L| = \binom{n}{k}_q$.
		Then, 
		\[|S| \geq \left (1-q^{n-2k} - \sqrt{q^{n-k} m^{-1}} \right ) mq^{n-k}.\]
		In particular, the same lower bound holds for $K(q,n,k,m)$.
	\end{theorem}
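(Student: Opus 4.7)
The plan is a second-moment argument against the uniform distribution on the set $\mathcal{F}$ of all $k$-flats of $\F_q^n$, where $|\mathcal{F}|=q^{n-k}\binom{n}{k}_q$. For each $\ell\in\mathcal{F}$ I define the deviation
\[
\delta_\ell := |S\cap\ell| - |S|\,q^{k-n},
\]
measuring how far $|S\cap\ell|$ sits from its average over a uniformly random $k$-flat; note $\sum_{\ell\in\mathcal{F}}\delta_\ell=0$. The key quantity will be $\sum_{\ell\in\mathcal{F}}\delta_\ell^2$.

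The first step is to compute this second moment exactly by double counting triples $(p,p',\ell)$ with $p,p'\in S\cap\ell$. Using the standard incidence counts that each point lies on $\binom{n}{k}_q$ many $k$-flats and each pair of distinct points on $\binom{n-1}{k-1}_q$ many, I get $\sum_{\ell\in\mathcal{F}}|S\cap\ell|^2 = |S|\binom{n}{k}_q + |S|(|S|-1)\binom{n-1}{k-1}_q$. Expanding $\sum_{\ell\in\mathcal{F}}\delta_\ell^2$ using this together with $\sum_{\ell\in\mathcal{F}}|S\cap\ell|=|S|\binom{n}{k}_q$ and the identity $\binom{n-1}{k-1}_q=\binom{n}{k}_q(q^k-1)/(q^n-1)$, a short algebraic simplification collapses everything to
\[
\sum_{\ell\in\mathcal{F}}\delta_\ell^2 \;=\; \frac{\binom{n}{k}_q(q^n-q^k)}{q^n-1}\,|S|\bigl(1-|S|/q^n\bigr) \;\le\; \binom{n}{k}_q\,|S|\bigl(1-|S|/q^n\bigr).
\]

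For the second step, if $|S|\,q^{k-n}\ge m$ then $|S|\ge mq^{n-k}$ and the theorem is trivial, so assume $|S|\,q^{k-n}<m$. Then $\delta_\ell\ge m-|S|q^{k-n}>0$ for every $\ell\in L$, hence $\sum_{\ell\in L}\delta_\ell\ge|L|(m-|S|q^{k-n})$. Cauchy--Schwarz gives $\bigl(\sum_{\ell\in L}\delta_\ell\bigr)^2\le|L|\sum_{\ell\in L}\delta_\ell^2\le|L|\sum_{\ell\in\mathcal{F}}\delta_\ell^2$, and the assumption $|L|=\binom{n}{k}_q$ makes the factors of $\binom{n}{k}_q$ cancel, reducing the whole problem to the single inequality
\[
\bigl(m-|S|q^{k-n}\bigr)^2 \;\le\; |S|\bigl(1-|S|/q^n\bigr).
\]

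The third step is to read off the claimed bound from this quadratic. Substituting $u:=|S|q^{k-n}$ converts it to $(1+q^{n-2k})u^2-(2m+q^{n-k})u+m^2\le 0$. Estimating the discriminant via $\sqrt{a+b}\le\sqrt{a}+\sqrt{b}$ and choosing the smaller root gives $u\ge(m-\sqrt{m\,q^{n-k}})/(1+q^{n-2k})$; multiplying by $q^{n-k}$ and using $1/(1+q^{n-2k})\ge 1-q^{n-2k}$ yields $|S|\ge mq^{n-k}(1-\sqrt{q^{n-k}/m})(1-q^{n-2k})\ge mq^{n-k}(1-q^{n-2k}-\sqrt{q^{n-k}/m})$, as desired. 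The one non-routine step is the algebraic collapse in Step 1; everything else is standard Cauchy--Schwarz bookkeeping. The hypothesis $|L|=\binom{n}{k}_q$ enters precisely to cancel the prefactor in $\sum_{\ell\in\mathcal{F}}\delta_\ell^2$, and the two error terms $q^{n-2k}$ and $\sqrt{q^{n-k}/m}$ emerge respectively as the coefficient of $u^2$ and as the discriminant correction in the final quadratic.
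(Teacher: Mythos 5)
Your proof is correct, and the final quadratic you solve, $(1+q^{n-2k})u^2-(2m+q^{n-k})u+m^2\le 0$ with $u=|S|q^{k-n}$, is the same one the paper ends up with. The difference is the route to the key second-moment inequality. The paper reduces the theorem to Lemma~\ref{th:heavyFlatsCoverManyPoints}, which in turn invokes the Haemers point--flat incidence bound (Lemma~\ref{th:incidenceBound}, cited from~\cite{haemers1980eigenvalue} and proved there by spectral/eigenvalue methods) as a black box, then does the same Cauchy--Schwarz and quadratic manipulations. You instead bypass the cited incidence bound and derive the variance estimate
\[\sum_{\ell\in\mathcal F}\delta_\ell^2=\binom{n}{k}_q\frac{q^n-q^k}{q^n-1}\,|S|\Bigl(1-\frac{|S|}{q^n}\Bigr)\]
from scratch by double-counting triples $(p,p',\ell)$ and using the identity $\binom{n-1}{k-1}_q=\binom{n}{k}_q\frac{q^k-1}{q^n-1}$. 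This makes the argument fully self-contained and purely combinatorial; it also exposes cleanly where the $(1-|S|/q^n)$ factor comes from, which the paper uses in the proof of Lemma~\ref{th:heavyFlatsCoverManyPoints} even though it does not appear in its stated Lemma~\ref{th:incidenceBound}. What the paper's route buys is modularity (Lemma~\ref{th:heavyFlatsCoverManyPoints} is reused for Theorem~\ref{th:largeMFurst}); what yours buys is elementarity. One minor point to make explicit: when the discriminant of your quadratic is negative, the inequality $(1+q^{n-2k})u^2-(2m+q^{n-k})u+m^2\le 0$ has no solutions and the assumption $|S|q^{k-n}<m$ already fails, so $|S|\ge mq^{n-k}$ and the theorem still holds; your discriminant estimate via $\sqrt{a+b}\le\sqrt{a}+\sqrt{b}$ (after discarding the nonpositive $-4q^{n-2k}m^2$ term) is valid in either case.
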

	
	Note that, if $m < q^{n-k}$, then the right side of the inequality in Theorem \ref{th:pureIncidences} is negative. Hence Theorem \ref{th:pureIncidences} is interesting only for larger $m$.
	
	The proof of Theorem \ref{th:largeMFurst} combines (\ref{eq:kakeya}) with incidence estimates for large sets in finite fields. The proof of Theorem \ref{th:pureIncidences} relies only on incidence estimates for large sets in finite fields, and doesn't rely on the polynomial method.

	Lastly, when $n$ is divisible by $k$, a very simple proof shows that the following  bound follows directly from (\ref{eqn:generalMKakeya}).
	
	\begin{theorem}\label{thm:FurstenbergDiv}
	Let $q$ be a prime power, and let $n,k$ and $m$ be positive integers such that $m \leq q^k$ and $n$ is divisible by $k$, we have
$$K(q,n,k,m)\ge \frac{1}{2^{n/k}} m^{n/k} .$$
	\end{theorem}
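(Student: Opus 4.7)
The plan is to reduce the $(k,m)$-Furstenberg problem in $\F_q^n$ with $k \mid n$ to a $(1,m)$-Furstenberg problem over the larger field $\F_{q^k}$, and then invoke the base case $k=1$ of Theorem~\ref{thm:FurstenbergRecurse} (which is the standard polynomial-method bound for $(1,m)$-Furstenberg sets).

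Let $t = n/k$. Since $\F_{q^k}$ has dimension $k$ as an $\F_q$-vector space, we may fix an $\F_q$-linear isomorphism $\varphi\colon \F_q^n \to \F_{q^k}^t$. The key observation is that every $1$-dimensional $\F_{q^k}$-subspace $W'$ of $\F_{q^k}^t$, being an $\F_q$-subspace of dimension $k$ in $\F_{q^k}^t$, corresponds under $\varphi^{-1}$ to a $k$-dimensional $\F_q$-subspace $W$ of $\F_q^n$. (Note that the converse fails: most $k$-dimensional $\F_q$-subspaces of $\F_q^n$ are \emph{not} $\F_{q^k}$-lines, so this reduction discards most of the Furstenberg directions, which is why the resulting bound is stronger than Theorem~\ref{thm:FurstenbergRecurse}.)

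Now suppose $S \subseteq \F_q^n$ is $(k,m)$-Furstenberg, and let $S' = \varphi(S) \subseteq \F_{q^k}^t$. Given any $1$-dimensional $\F_{q^k}$-subspace $W' \subseteq \F_{q^k}^t$, the Furstenberg hypothesis applied to $W = \varphi^{-1}(W')$ yields a translate of $W$ meeting $S$ in at least $m$ points; applying $\varphi$ produces a translate of $W'$ meeting $S'$ in at least $m$ points. Hence $S'$ is a $(1,m)$-Furstenberg set in $\F_{q^k}^t$, and since $m \leq q^k$ the base case $k=1$ of Theorem~\ref{thm:FurstenbergRecurse} applies to give
\[
|S| = |S'| \;\geq\; \frac{1}{2^t}\, m^t \;=\; \frac{1}{2^{n/k}}\, m^{n/k}.
\]

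There is essentially no obstacle: the only thing to verify carefully is the correspondence between $\F_{q^k}$-lines in $\F_{q^k}^t$ and $k$-dimensional $\F_q$-subspaces of $\F_q^n$ under the $\F_q$-linear isomorphism $\varphi$, and the fact that the $(k,m)$-Furstenberg property of $S$ restricts to the $(1,m)$-Furstenberg property of $S'$ over the subfield directions. Once this identification is made, the bound drops out immediately from the already-established $k=1$ case.
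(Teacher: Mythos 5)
Your proof is correct and follows essentially the same route as the paper: fix an $\F_q$-linear isomorphism $\F_q^n \cong \F_{q^k}^{n/k}$, observe that $\F_{q^k}$-lines correspond to rank-$k$ $\F_q$-subspaces, conclude that the image of $S$ is a $(1,m)$-Furstenberg set in $\F_{q^k}^{n/k}$, and invoke the $k=1$ polynomial-method bound. If anything you are slightly more careful than the paper's exposition, which phrases the reduction in terms of "Kakeya sets" and (\ref{eq:kakeya}) even though the theorem covers general $m \leq q^k$ and hence really needs the $(1,m)$ version of the bound, exactly as you state.
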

	
	\paragraph{Organization:} We begin in Section~\ref{sec:prelim} with some preliminaries on finite geometry and polynomials over finite fields. In Section~\ref{sec:entropy} we discuss the equivalent entropic formulation to the problem of bounding the size of Furstenberg sets. In Section~\ref{sec:entropykakeya} we prove the one dimensional case  of the entropic version using the polynomial method  and in Section~\ref{sec:entorpyfurst} we prove the general case (Theorem~\ref{thm:FurstenbergRecurse}) using recursion.
	Theorem~\ref{thm:FurstenbergDiv} is proved in Section~\ref{sec:divide} and Theorems \ref{th:largeMFurst} and \ref{th:pureIncidences} are proved in Section~\ref{sec:geometric}.

\section{Preliminaries}\label{sec:prelim}
\subsection{Facts from finite geometry}\label{sec:finiteGeometry}
	In this section, we review a few basic facts from finite geometry, as well as the results we need from incidence geometry.
	
	A {\em $k$-flat} is a translate of a rank $k$ linear subspace.
	The span of a set $X \subseteq \F_q^n$ is the smallest flat that contains $X$, and is denoted $\overline{X}$.
    For flats $\Lambda,\Gamma$ in $\F_q^n$, we denote by $\overline{\Lambda,\Gamma}$ the span of $\Lambda \cup \Gamma$.
    If $\Lambda$ and $\Gamma$ are subspaces ({\em i.e.} they each contain the origin), then
    \begin{equation}\label{eq:dimSpan}
        \dim(\overline{\Lambda, \Gamma}) = \dim(\Lambda) + \dim(\Gamma) - \dim(\Lambda \cap \Gamma).
    \end{equation}
	
	For integers $1 \leq k < n$, the number of rank $k$ subspaces of $\F_q^n$ is given by the $q$-binomial coefficient $\binom{n}{k}_q$.
	As with ordinary binomial coefficients, the $q$-binomial coefficients are centrally symmetric:
	\begin{equation}
		\label{eq:centrallySymmetric} \binom{n}{k}_q = \binom{n}{n-k}_q.
	\end{equation}
	The Pascal identities for $q$-binomial coefficients are
	\begin{align}
	\label{eq:pascal1} \binom{n}{k}_q &= q^k \binom{n-1}{k}_q + \binom{n-1}{k-1}_q, \text{ and} \\
	\label{eq:pascal2} \binom{n}{k}_q &= \binom{n-1}{k}_q + q^{n-k} \binom{n-1}{k-1}_q.
	\end{align}
	A direct expression is given by
	\begin{equation}\label{eq:exactQBinom} \binom{n}{k}_q = \frac{(1 - q^n)(1- q^{n-1}) \ldots (1 - q^{n-k+1})}{(1-q)(1-q^2)\ldots (1-q^k)}.\end{equation}

	The number of $k$-flats in $\F_q^n$ is $ q^{n-k}\binom{n}{k}_q$.
	
    A point is {\em incident} to a flat if the point is contained in the flat.
	Given a set $L$ of flats, and a set $S$ of points, both in $\F_q^n$, we denote by
	\[I(S,L) = | \{(p,\ell) \in S \times L : p \in \ell \} | \]
	the number of incidences between $S$ and $L$.
	
	The following bound on the number of incidences between points and $k$-flats was first proved by Haemmers \cite[Chapter 3]{haemers1980eigenvalue}.
	The exact statement used here can also be recovered from the proof of Theorem 1 in \cite{lund2016incidence}.
	
	\begin{lemma}\label{th:incidenceBound}
		If $S$ is a set of points and $L$ a set of $k$-flats, both in $\F_q^n$, then
		\[I(S,L) \leq q^{k-n} |S|\, |L| + \sqrt{q^k \binom{n-1}{k}_q |S| \, |L| \, (1-|S|q^{-n})\left(1 - |L|q^{k-n}\binom{n}{k}_q^{-1}\right)}. \]
	\end{lemma}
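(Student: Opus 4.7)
The plan is to apply the expander mixing lemma to the biregular bipartite incidence graph $G$ whose vertex classes are the points of $\F_q^n$ and all $k$-flats of $\F_q^n$, with an edge for each incidence. Let $A$ be the biadjacency matrix of $G$; every point has degree $d_1 = \binom{n}{k}_q$ and every $k$-flat has degree $d_2 = q^k$, so the ``expected'' number of incidences between $S$ and $L$ predicted by edge density is
\[ \frac{|S|\,|L|\,d_1}{q^{n-k}\binom{n}{k}_q} = q^{k-n}|S|\,|L|, \]
which already matches the main term in the claimed bound.

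The central calculation is to identify the second singular value of $A$, equivalently the second eigenvalue of $M := AA^T$. A direct count shows $M_{p,p} = \binom{n}{k}_q$ (the number of $k$-flats through a single point $p$), while for distinct $p \neq q$ any $k$-flat through both must contain the unique line $\overline{\{p,q\}}$, so $M_{p,q} = \binom{n-1}{k-1}_q$. Hence $M = c_1 I + c_2 J$ with $c_2 = \binom{n-1}{k-1}_q$ and, by the Pascal identity \eqref{eq:pascal1}, $c_1 = \binom{n}{k}_q - \binom{n-1}{k-1}_q = q^k \binom{n-1}{k}_q$. The eigenvalues of $M$ are therefore $c_1 + q^n c_2 = q^k\binom{n}{k}_q = d_1 d_2$ (on the all-ones vector) and $c_1 = q^k\binom{n-1}{k}_q$ (on its orthogonal complement), giving $\sigma_2(A) = \sqrt{q^k\binom{n-1}{k}_q}$.

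To conclude, I decompose $\mathbf{1}_S = \frac{|S|}{q^n}\mathbf{1} + u$ on the point side and $\mathbf{1}_L = \frac{|L|}{q^{n-k}\binom{n}{k}_q}\mathbf{1} + v$ on the flat side, with $u$ and $v$ orthogonal to the respective all-ones vectors and $\|u\|^2 \leq |S|$, $\|v\|^2 \leq |L|$. Since $I(S,L) = \mathbf{1}_S^T A \mathbf{1}_L$, the Perron component contributes exactly $q^{k-n}|S|\,|L|$, while Cauchy--Schwarz bounds the cross term $u^T A v$ by $\sigma_2 \|u\|\|v\| \leq \sqrt{q^k\binom{n-1}{k}_q |S|\,|L|}$; the two remaining cross terms involving one Perron factor vanish by orthogonality. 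Summing gives the inequality. The one point that requires care (rather than being a true obstacle) is keeping the normalizations consistent in the bipartite setting, where the two vertex classes have different cardinalities and different degrees; the rest is routine linear algebra together with a single application of the Pascal identity.
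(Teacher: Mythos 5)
The paper does not supply its own proof of this lemma; it cites Haemers' thesis (Theorem 3.1.1) and moves on. Your proof is correct and is essentially the same spectral argument that appears in that reference: identify the bipartite incidence graph as biregular, compute $AA^T = c_1 I + c_2 J$ on the point side with $c_2 = \binom{n-1}{k-1}_q$ and, via the Pascal identity~\eqref{eq:pascal1}, $c_1 = q^k\binom{n-1}{k}_q$, read off $\sigma_2 = \sqrt{q^k\binom{n-1}{k}_q}$, and finish with the expander mixing lemma. All the counts check out (every point lies on $\binom{n}{k}_q$ $k$-flats, every $k$-flat has $q^k$ points, and two distinct points lie together on exactly $\binom{n-1}{k-1}_q$ $k$-flats, since such a flat must contain the line through them). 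One tiny remark: the equality $c_1 + q^n c_2 = q^k\binom{n}{k}_q$ that you assert for the Perron eigenvalue follows from the \emph{other} Pascal identity~\eqref{eq:pascal2}, or more simply from the general fact that for a $(d_1,d_2)$-biregular bipartite graph the top eigenvalue of $AA^T$ is always $d_1 d_2$ — so no identity is really needed there. This is a good, self-contained replacement for the external citation.
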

	
	Given a set $S$ of points, a flat is {\em $(S,t)$-rich} if it contains at least $t$ points of $S$.
	A flat is {\em $(S,t)$-poor} if it contains fewer than $t$ points of $S$.
	The following upper bound on the number of $(S,t)$-poor flats is a slight reformulation of \cite[Corollary 5]{lund2016incidence}.
	A slightly weaker bound was proved earlier by Alon \cite{alon1986eigenvalues}.
	
	\begin{lemma}\label{th:becks}
		Let $S \subset \mathbb{F}_q^k$ be a set of $m$ points.
		Let $0 < \delta < 1$ and $1 \leq \ell \leq k-1$.
		The number of $(S,\delta m q^{\ell - k} + 1)$-poor $\ell$-flats is at most 
		\[\left (1 + mq^{\ell-k}(1-\delta)^2 \right )^{-1} q^{k-\ell} \binom{k}{\ell}_q.\]
	\end{lemma}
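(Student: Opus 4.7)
The plan is to bound the number of poor $\ell$-flats via a spectral/second-moment argument on the point--flat incidence structure in $\mathbb{F}_q^k$, essentially a bipartite expander mixing lemma in its refined form. Write $T = q^{k-\ell}\binom{k}{\ell}_q$ for the total number of $\ell$-flats, $\mu = mq^{\ell-k}$ for the average number of points of $S$ per flat, and $c = (1-\delta)^2$. Let $B$ be the point--flat incidence matrix, and let $L$ be the set of poor flats, of size $p$.

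The first step is a short eigenvalue computation. Each point lies in $\binom{k}{\ell}_q$ of the $\ell$-flats, and each pair of distinct points lies in $\binom{k-1}{\ell-1}_q$ of them (quotient $\mathbb{F}_q^k$ by the affine line through the pair and count $(\ell-1)$-dimensional subspaces of $\mathbb{F}_q^{k-1}$). Hence $BB^T = q^{\ell}\binom{k-1}{\ell}_q\, I + \binom{k-1}{\ell-1}_q\, J$ by the Pascal identity \eqref{eq:pascal1}. Its top eigenvalue is $q^{\ell}\binom{k}{\ell}_q$ on the all-ones direction (using \eqref{eq:pascal2}), and all other eigenvalues equal $\lambda := q^{\ell}\binom{k-1}{\ell}_q$.

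Decomposing $\mathbf{1}_S = (m/q^k)\mathbf{1} + s'$ and $\mathbf{1}_L = (p/T)\mathbf{1} + \ell'$ with $s'$ and $\ell'$ orthogonal to the respective all-ones vectors, a direct expansion gives $I(S,L) = p\mu + \langle s', B\ell'\rangle$. The singular value bound, combined with $\|s'\|^2 = m(1-m/q^k)$ and $\|\ell'\|^2 = p(1-p/T)$, yields
\[ \bigl|I(S,L) - p\mu\bigr| \;\le\; \sqrt{\lambda\, m(1-mq^{-k})\, p(1-p/T)}. \]
Since every flat in $L$ contains at most $\delta\mu$ points of $S$, we have $p\mu - I(S,L) \ge p\mu(1-\delta)$. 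Squaring, cancelling a factor of $p$, and substituting $\mu = mq^{\ell-k}$ rearranges to
\[ p c\mu \cdot \frac{T}{T-p} \;\le\; q^{k}\binom{k-1}{\ell}_q\bigl(1 - mq^{-k}\bigr). \]

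It remains to observe that the right-hand side above is at most $T$: by \eqref{eq:pascal2}, $T = q^{k}\binom{k-1}{\ell}_q + q^{k-\ell}\binom{k-1}{\ell-1}_q \ge q^{k}\binom{k-1}{\ell}_q \ge q^{k}\binom{k-1}{\ell}_q(1-mq^{-k})$. Therefore $pc\mu \le T-p$, which rearranges exactly to $p \le T/(1 + \mu(1-\delta)^2)$. The main subtlety lies in retaining the $(1-p/T)$ factor in the incidence estimate: the crude form recorded in Lemma~\ref{th:incidenceBound} drops this factor and yields only the weaker bound $p \lesssim T/(\mu(1-\delta)^2)$, which is vacuous when $\mu(1-\delta)^2$ is $O(1)$. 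Keeping the $(1-p/T)$ factor (essentially free from the spectral decomposition) is precisely what upgrades the denominator to the claimed $1 + \mu(1-\delta)^2$.
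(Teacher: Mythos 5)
Your argument is correct, but there is nothing internal to measure it against: the paper does not prove Lemma~\ref{th:becks} at all, importing it as a slight reformulation of Corollary~5 of \cite{lund2016incidence} (with a weaker earlier bound due to Alon \cite{alon1986eigenvalues}). What you have written is a self-contained version of the eigenvalue/second-moment argument that underlies those cited results, so the comparison is simply: the paper buys brevity by outsourcing the lemma, while your route makes the paper self-contained and makes explicit why the crude incidence bound already recorded as Lemma~\ref{th:incidenceBound} would not do --- that lemma is only a one-sided upper bound, and even its two-sided analogue without the $(1-p/T)$ factor yields a denominator $\mu(1-\delta)^2$ instead of $1+\mu(1-\delta)^2$, which is vacuous when $\mu(1-\delta)^2=O(1)$; retaining $1-p/T$ is exactly what the citation is buying. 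The details check out: $BB^T=q^{\ell}\binom{k-1}{\ell}_q I+\binom{k-1}{\ell-1}_q J$, the eigenvalue $q^{\ell}\binom{k}{\ell}_q$ on the all-ones direction and $\lambda=q^{\ell}\binom{k-1}{\ell}_q$ elsewhere, the exact identity $I(S,L)=p\mu+\langle s',B\ell'\rangle$ (the cross terms vanish because $B$ has constant row and column sums), and the final rearrangement to $p\le T/(1+\mu(1-\delta)^2)$. Two small blemishes, neither a real gap: in the last step you cite \eqref{eq:pascal2}, but the identity you use, $\binom{k}{\ell}_q=q^{\ell}\binom{k-1}{\ell}_q+\binom{k-1}{\ell-1}_q$, is \eqref{eq:pascal1}; and you read ``$(S,\delta mq^{\ell-k}+1)$-poor'' as ``contains at most $\delta mq^{\ell-k}$ points'', which is exact only when $\delta mq^{\ell-k}$ is an integer, since otherwise a poor flat may contain $\lceil\delta mq^{\ell-k}\rceil$ points. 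That reading is clearly the intended one --- under the literal reading the stated constant can even fail (take $S=\F_q^k$ and $\delta$ with $\delta q^{\ell}=q^{\ell}-1/2$, so every $\ell$-flat is poor) --- and the discrepancy costs only an additive $1$ per flat, harmless in the paper's application, but it deserves a sentence in your write-up.
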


	\subsection{Method of multiplicities}\label{prelim:multiplicities}

The results here are from a paper by Dvir, Kopparty, Saraf, and Sudan~\cite{dvir2013extensions}. We state the theorems we need and the proofs can be found in the aforementioned paper.

\begin{definition}[Hasse Derivatives]
Given a polynomial $P\in \F[x_1,\hdots,x_n]$ and an $i\in \Z_{\ge 0}^n$, the $i$th {\em Hasse derivative} of $P$ is the polynomial $P^{(i)}$ in the expansion $P(x+z)=\sum_{i\in \Z_{\ge 0}^n} P^{(i)}(x)z^i$ where $x=(x_1,...,x_n)$, $z=(z_1,...,z_n)$ and $z^i=\prod_{j=1}^n z_j^{i_j}$.  
\end{definition}

Hasse derivatives satisfy some useful identities. We state the only one we will need.

\begin{lemma}\label{lem:chainRule}
Given a polynomial $P\in \F[x_1,\hdots,x_n]$ and $i,j\in \Z_{\ge 0}^n$, we have 
$$(P^{(i)})^{(j)}=P^{(i+j)}\prod\limits_{k=1}^n\binom{i_k+j_k}{i_k}$$
\end{lemma}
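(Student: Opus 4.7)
The plan is a standard double-expansion argument: compute $P(x+z+w)$, with $z$ and $w$ formal variable vectors disjoint from $x$, in two different ways starting from the defining property of the Hasse derivative, and then match coefficients of the monomials $z^i w^j$.

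For the first expansion, apply the definition once around $x$ in direction $z+w$ to obtain $P(x+z+w) = \sum_a P^{(a)}(x)(z+w)^a$, then expand each $(z+w)^a$ coordinatewise via the ordinary one-variable binomial theorem, yielding $(z+w)^a = \sum_{b \leq a} \left( \prod_k \binom{a_k}{b_k} \right) z^{a-b} w^b$. Reindexing with $i = a-b$ and $j = b$, this becomes $P(x+z+w) = \sum_{i,j} \left( \prod_k \binom{i_k+j_k}{j_k} \right) P^{(i+j)}(x)\, z^i w^j$.

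For the second expansion, apply the definition twice in succession: first around $x+z$ in direction $w$, giving $P((x+z)+w) = \sum_j P^{(j)}(x+z)\, w^j$; then expand each $P^{(j)}$ around $x$ in direction $z$, giving $P^{(j)}(x+z) = \sum_i (P^{(j)})^{(i)}(x)\, z^i$. Combining these gives $P(x+z+w) = \sum_{i,j} (P^{(j)})^{(i)}(x)\, z^i w^j$.

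Since $z$ and $w$ are indeterminates, the coefficients of $z^i w^j$ in the two expansions must coincide, yielding $(P^{(j)})^{(i)}(x) = \prod_k \binom{i_k+j_k}{j_k} \, P^{(i+j)}(x)$, which is the stated identity after swapping the roles of $i$ and $j$ and using $\binom{i_k+j_k}{j_k} = \binom{i_k+j_k}{i_k}$. No step is a serious obstacle; the only care needed is the coordinatewise factoring of the multivariate binomial expansion and the reindexing, both of which are purely formal manipulations that work identically in any characteristic because we stay at the level of formal polynomial identities.
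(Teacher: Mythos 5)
Your double-expansion argument is correct and complete: expanding $P(x+z+w)$ once in the combined direction $z+w$ and once in two stages, then matching coefficients of $z^i w^j$, is exactly the standard proof of this identity (and is the one in the cited Dvir--Kopparty--Saraf--Sudan paper; the present paper itself only cites that reference rather than reproving the lemma). You also correctly read past the typo $\binom{i_k+j_k}{i_j}$ in the statement, which should be $\binom{i_k+j_k}{i_k}$, and your observation that the whole thing is a formal identity in $\F[x,z,w]$ independent of characteristic is the right thing to keep in mind for Hasse derivatives.
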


We make precise what it means for a polynomial to vanish on a point $a\in \F^n$ with multiplicity. First we recall for a point $j$ in the non-negative lattice $\Z^n_{\ge 0}$, its weight is defined as $\text{wt}(i)=\sum_{i=1}^n j_i$.

\begin{definition}[Multiplicity]
For a polynomial $P\in \mathbb{F}[x_1, \ldots, x_n]$ and a point $a\in \mathbb{F}^n$, we say $P$ vanishes on $a$ with {\em multiplicity} $N$, if $N$ is the largest integer such that all Hasse derivatives of $P$ of weight strictly less than $N$ vanish on $a$. We use $\text{mult}(P,a)$ to refer to the multiplicity of $P$ at $a$.
\end{definition}

Notice, $\text{mult}(P,a)=1$ just means $f(a)=0$. We will use the following simple property concerning multiplicities of composition of polynomials.

\begin{lemma}\label{lem:multComp}
Given a polynomial $P\in \F[x_1,\hdots,x_n]$ and a tuple $Q=(Q_1,\hdots,Q_n)$ of polynomials in $\F[y_1,\hdots,y_m]$, and $a\in \F^m$ we have, 
$$\text{mult}(P\circ Q, a)\ge \text{mult}(P,Q(a)).$$
\end{lemma}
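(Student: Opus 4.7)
The plan is to verify directly from the definition of multiplicity that every Hasse derivative of $R := P \circ Q$ of weight less than $N := \text{mult}(P, Q(a))$ vanishes at the point $a$. Since the Hasse derivatives of $R$ at $a$ are by definition the coefficients in the expansion $R(a+z) = \sum_{j} R^{(j)}(a)\, z^j$, it suffices to show that the polynomial $R(a+z) \in \F[z_1,\ldots,z_m]$ has no monomial of total $z$-degree strictly less than $N$.

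To do this, I would write $Q(a+z) = Q(a) + \tilde{\Delta}(z)$, where $\tilde{\Delta}(z) := Q(a+z) - Q(a)$ is a tuple of $n$ polynomials in $z$, each with zero constant term (since $\tilde{\Delta}(0) = 0$). Expanding $P$ around the point $Q(a)$ via the definition of Hasse derivatives then gives
$$R(a+z) \;=\; P\bigl(Q(a) + \tilde{\Delta}(z)\bigr) \;=\; \sum_{i \in \Z_{\ge 0}^n} P^{(i)}(Q(a))\, \tilde{\Delta}(z)^i.$$
By the hypothesis $\text{mult}(P, Q(a)) = N$, every summand with $\text{wt}(i) < N$ vanishes. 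For the remaining summands, the product $\tilde{\Delta}(z)^i = \prod_{k=1}^n \tilde{\Delta}_k(z)^{i_k}$ is a product of $\text{wt}(i) \ge N$ factors, each of which is a polynomial in $z$ with zero constant term; hence every monomial appearing in $\tilde{\Delta}(z)^i$ has total $z$-degree at least $\text{wt}(i) \ge N$. Consequently $R(a+z)$ contains no monomial of total $z$-degree less than $N$, which immediately yields $R^{(j)}(a) = 0$ for all $j$ with $\text{wt}(j) < N$, i.e., $\text{mult}(R,a) \ge N$.

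I do not anticipate any real obstacle here: the argument is essentially a routine unpacking of the definitions, the only nontrivial observation being that substituting tuples of polynomials with zero constant term preserves lower bounds on the degree of the resulting expression. In particular, neither the Hasse chain rule (Lemma~\ref{lem:chainRule}) nor any deeper identity is required — just the bare defining expansion of a Hasse derivative.
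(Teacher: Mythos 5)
Your proof is correct: expanding $P$ around $Q(a)$ via the defining Hasse-derivative identity, substituting the zero-constant-term tuple $Q(a+z)-Q(a)$, and noting that every surviving term contributes only monomials of $z$-degree at least $\text{wt}(i)\ge N$ is exactly the right argument. The paper itself gives no proof of this lemma (it defers to Dvir--Kopparty--Saraf--Sudan), and your argument is essentially the standard one from that reference, so there is nothing to add.
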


The key lemma here is an extended Schwartz-Zippel bound~\cite{schwartz1979probabilistic}\cite{ZippelPaper} which leverages multiplicities.

\begin{lemma}[Schwartz-Zippel with multiplicity]\label{lem:multSchwartz}
Let $f\in \F[x_1,..,x_n]$, with $\F$ an arbitrary field, be a nonzero polynomial of degree at most $d$. Then for any finite subset $U\subseteq \F$ ,
$$\sum\limits_{a\in U^n} \text{mult}(f,a) \le d|U|^{n-1}.$$
\end{lemma}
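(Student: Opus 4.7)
The plan is to prove the lemma by induction on $n$. The base case $n = 1$ is the classical statement that a nonzero univariate polynomial of degree at most $d$ has at most $d$ roots counted with multiplicity, so $\sum_{a \in U} \text{mult}(f, a) \le d$.

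For the inductive step with $n \ge 2$, I single out the last variable and write $f = \sum_{i=0}^{e} x_n^i \hat{f}_i(x_1, \ldots, x_{n-1})$ with $e = \deg_{x_n}(f)$, so $\hat{f}_e \not\equiv 0$ has total degree at most $d - e$. The heart of the argument is the following claim: for every $b \in U^{n-1}$,
\begin{equation*}
\sum_{c \in U} \text{mult}(f, (b, c)) \;\le\; e \;+\; |U| \cdot \text{mult}(\hat{f}_e, b).
\end{equation*}
Given the claim, summing over $b$ and applying the inductive hypothesis to $\hat{f}_e$ (a polynomial in $n-1$ variables of degree at most $d - e$) yields
\begin{equation*}
\sum_{a \in U^n} \text{mult}(f, a) \;\le\; e\,|U|^{n-1} + |U| \cdot (d-e)\,|U|^{n-2} \;=\; d\,|U|^{n-1},
\end{equation*}
which is the desired bound.

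To establish the claim, set $\mu = \text{mult}(\hat{f}_e, b)$. By the definition of multiplicity, there is a multi-index $\alpha \in \Z_{\ge 0}^{n-1}$ of weight $|\alpha| = \mu$ with $\hat{f}_e^{(\alpha)}(b) \ne 0$. Consider the univariate polynomial $h(t) := f^{(\alpha, 0)}(b, t) \in \F[t]$, where $(\alpha, 0) \in \Z_{\ge 0}^n$ has last entry equal to zero. Directly from the Hasse expansion, two facts can be verified: first, $h(t) = \sum_i \hat{f}_i^{(\alpha)}(b)\, t^i$, so the coefficient of $t^e$ is $\hat{f}_e^{(\alpha)}(b) \ne 0$ and $h$ is a nonzero polynomial of degree exactly $e$; second, the univariate Hasse derivatives of $h$ satisfy $h^{(j)}(c) = f^{(\alpha, j)}(b, c)$ for every $c \in \F$ and $j \ge 0$. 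The second fact immediately yields $\text{mult}(f, (b, c)) \le \mu + \text{mult}(h, c)$, because if the right-hand side equals $\mu + \nu$ then $h^{(\nu)}(c) \ne 0$, exhibiting the Hasse derivative $f^{(\alpha, \nu)}$ of weight $\mu + \nu$ as a non-vanishing derivative of $f$ at $(b, c)$. Summing over $c \in U$ and applying the base case to the nonzero degree-$e$ polynomial $h$ gives the claim.

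The principal obstacle is the case in which the naive slice $g_b(t) := f(b, t)$ is identically zero in $t$ (equivalently, $\hat{f}_i(b) = 0$ for all $i$), where the direct bound $\text{mult}(f, (b, c)) \le \text{mult}(g_b, c)$ is useless. The argument above circumvents this by working with the partial Hasse derivative $f^{(\alpha, 0)}$ in place of $f$: even when $\hat{f}_e(b) = 0$, the multi-index $\alpha$ of weight $\mu$ is chosen precisely so that $\hat{f}_e^{(\alpha)}(b)$, which is the leading $x_n$-coefficient of $h$, is nonzero, allowing the univariate base case to be applied to $h$ after all.
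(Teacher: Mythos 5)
Your proof is correct. The paper does not prove this lemma itself---it only states it and defers to the cited reference of Dvir, Kopparty, Saraf, and Sudan---and your induction on $n$, with the key claim $\sum_{c\in U}\text{mult}(f,(b,c))\le e+|U|\cdot \text{mult}(\hat f_e,b)$ established by passing to the partial Hasse derivative $f^{(\alpha,0)}$ so that the leading $x_n$-coefficient of the univariate slice is nonzero, is exactly the argument given in that reference.
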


We will also need the following lemma which lets us find polynomials which vanish on different points with differing multiplicities.

\begin{lemma}\label{lem:findPolyVanishMult}
Given a non-negative integer $d$ and a set of non-negative integers $N_x$ indexed by elements $x\in \F^n_q$ which satisfy

$$\sum\limits_{x\in \F^n_q}\binom{N_x+n-1}{n}< \binom{d+n}{n},$$
we can find a non-zero polynomial $P$ of total degree at most $d$ such that for all $x\in \F_q^n$, $P$ vanishes on $x$ with multiplicity at least $N_x$.
\end{lemma}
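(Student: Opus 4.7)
The statement is a standard dimension-count, so the plan is to set up the space of candidate polynomials and the vanishing constraints as a system of homogeneous linear equations whose variable count exceeds the equation count.

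First I would identify the ambient vector space: polynomials in $\F_q[x_1,\ldots,x_n]$ of total degree at most $d$ form a vector space over $\F_q$ of dimension equal to the number of monomials $x^\alpha$ with $\text{wt}(\alpha) \le d$, which is $\binom{d+n}{n}$. A polynomial $P$ is thus encoded by its $\binom{d+n}{n}$ coefficients, which will be my unknowns.

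Next I would count the constraints imposed at each point $x \in \F_q^n$. By the definition of multiplicity, requiring $\text{mult}(P,x) \ge N_x$ is equivalent to requiring $P^{(i)}(x) = 0$ for every $i \in \Z_{\ge 0}^n$ with $\text{wt}(i) < N_x$. Each such condition is one $\F_q$-linear equation in the coefficients of $P$, and the number of such multi-indices $i$ equals the number of nonnegative integer solutions to $i_1 + \cdots + i_n \le N_x - 1$, which is $\binom{N_x + n - 1}{n}$ (by the hockey-stick identity applied to $\sum_{j=0}^{N_x-1}\binom{j+n-1}{n-1}$). Summing over all points $x \in \F_q^n$ gives a total of $\sum_{x \in \F_q^n}\binom{N_x + n - 1}{n}$ linear constraints.

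Finally I would invoke elementary linear algebra: a homogeneous system over $\F_q$ with strictly more unknowns than equations has a nontrivial solution. By the hypothesis the number of constraints is strictly less than $\binom{d+n}{n}$, so the kernel contains a nonzero vector, which corresponds to a nonzero polynomial $P$ of total degree at most $d$ whose Hasse derivatives of weight less than $N_x$ vanish at $x$, i.e.\ $\text{mult}(P,x) \ge N_x$, for every $x \in \F_q^n$.

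There is no real obstacle here; the only step that needs a little care is verifying the count $\binom{N_x+n-1}{n}$ of multi-indices of weight less than $N_x$ and confirming that the Hasse-derivative vanishing conditions are genuinely $\F_q$-linear in the coefficients of $P$ (which is immediate from the expansion $P(x+z) = \sum_i P^{(i)}(x)z^i$, since each $P^{(i)}(x)$ is an $\F_q$-linear function of the coefficients of $P$ for any fixed $x$).
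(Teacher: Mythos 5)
Your proposal is correct and follows the paper's argument exactly: both count the dimension $\binom{d+n}{n}$ of the space of degree-$\le d$ polynomials, count $\binom{N_x+n-1}{n}$ linear constraints from the multiplicity conditions at each point, and conclude by linear algebra that a nonzero solution exists. You supply slightly more detail (the multi-index count and linearity of the Hasse-derivative conditions) than the paper, but the approach is the same.
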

\begin{proof}
Note $ \binom{d+n}{n}$ is the vector space dimension of the space of polynomials in $n$ variables with total degree at most $d$. The condition of a polynomial vanishing on a point $x$ with multiplicity $N_x$ is defined by $\binom{N_x+n-1}{n}$ many linear equations in the coefficients of the polynomial. The condition of vanishing on $x$ with multiplicity $N_x$ for all $x$ is then defined by at most $\sum_{x\in \F^n_q} \binom{N_x+n-1}{n}$ many linear equations. The condition in the statement of the lemma implies that we can find a non-zero polynomial which satisfies all these conditions.
\end{proof}

\section{Entropy formulation for the Kakeya problem}\label{sec:entropy}

Let $R$ be a random variable (r.v.) taking values in $\F_q^n$. The $q$-ary {\em min entropy} of $R$ (or just min-entropy if $q$ is clear from the context) is defined as
\[ \minentq(R) = -\log_q \left( \max_{w \in \F_q^n}\bP[ R = w ] \right) \]
For example, if $R$ is distributed uniformly on a set of size $q^k$ then its min-entropy will be exactly $k$. In general, a r.v with min-entropy $k$ must have support size at least $q^k$.

We first consider a class of statements which state Furstenberg bounds in the usual manner.

\begin{definition}(Furstenberg set bound, $A(n,k)$)
Let $1 \leq k < n$ be integers. We say that the statement $A(n,k)$ holds with constant $C_{n,k}$ if the following is true: 
\begin{quote}
If $S \subset \F_q^n$ is $(k,m)$-Furstenberg then $|S| \geq C_{n,k}\cdot m^{n/k}$.	
\end{quote}
In other words $A(n,k)$ is the statement that $K(q,n,k,m)\geq C_{n,k}\cdot m^{n/k}$.
\end{definition}

Note, as mentioned earlier, the proof of the Kakeya bound in~\cite{dvir2013extensions} shows that for all $n$, $A(n,1)$ holds with $C_{n,1} = 2^{-n}$.

We now define a seemingly different statement involving min-entropy of linear maps.
\begin{definition}(Linear maps with high min-entropy, $B(n,k)$)
Let $1 \leq k < n$ be integers. We say that the statement $B(n,k)$ holds with constant $D_{n,k}$ if the following is true: 
\begin{quote}
For all $\delta \in [0,1]$, if $S \subset \F_q^n$ is of size $|S| = q^{\delta n}$ then there exists an onto linear map $\varphi : \F_q^n \mapsto \F_q^{n-k}$ such that $\minentq(\varphi(U_S)) \geq \delta(n-k) - D_{n,k}$, where $U_S$ is a random variable distributed uniformly over $S$, and $\varphi(U_S)$ is the pushforward of $U_S$.
\end{quote}
\end{definition}

In other words, $B(n,k)$ says that given the random variable $U_S$, which is uniform over a set $S$ of size $q^{\delta n}$  and hence having min-entropy $\delta n$, one can find a linear map that keeps the same {\em relative} min-entropy (the ratio between min-entropy and dimension) up to some small loss $D_{n,k}$. 

The two statements $A(n,k)$ and $B(n,k)$ are equivalent for $C_{n,k}\in (0,1]$ and $D_{n,k}\ge 0$, with a simple formula relating $C_{n,k}$ and $D_{n,k}$.

\begin{lemma}\label{lem-BtoA}
For integers $1 \leq k < n$. If  $B(n,k)$ holds  with constant $0\le D_{n,k}$, then $A(n,k)$ holds  with constant $$ C_{n,k} = q^{-\frac{n}{k}D_{n,k}}.$$
\end{lemma}
\begin{proof}

Let $S \subset \F_q^n$ be $(k,m)$-Furstenberg, and suppose that $B(n,k)$ holds.
Let $\varphi$ be an arbitrary linear map from $\F^n$ to $\F^{n-k}$.
Since $\varphi^{-1}(x)$ is a $k$-flat for each $x \in \F^{n-k}$ and $S$ is $(k,m)$-Furstenberg,
\[ \max_{x \in \F_q^{n-k}} |\varphi^{-1}(x)| \geq m, \]
and hence
\[ H_\infty^q(\varphi(U_S)) \leq -\log_q(m |S|^{-1}). \]
Taking $\delta$ such that $|S|=q^{\delta n}$, $B(n,k)$ implies that
\[ \log_q(m|S|^{-1}) \leq D_{n,k} - \delta(n-k) \]
and hence
\[ m \leq q^{D_{n,k}} |S|^{k/n}. \]
Since $A(n,k)$ is equivalent to $m \leq (|S| C_{n,k}^{-1})^{k/n}$, this implies that $A(n,k)$ holds for $C=q^{-(n/k)D_{n,k}}$, as claimed.
\end{proof}

We also show that $A(n,k)$ implies $B(n,k)$ for suitable choices of $C_{n,k}$ and $D_{n,k}$, although this direction is not needed in the proof of Theorem \ref{thm:FurstenbergRecurse}.

\begin{lemma}\label{lem-AtoB}
For integer $1 \leq k < n$. If  $A(n,k)$ holds  with constant $0<C_{n,k}\le 1$ then $B(n,k)$ holds  with constant $$ D_{n,k} = \frac{k}{n}\cdot \log_q\left(\frac{1}{C_{n,k}}\right) .$$
\end{lemma}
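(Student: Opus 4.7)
The plan is to translate the min-entropy condition into a geometric condition on intersection sizes between $S$ and translates of a $k$-dimensional subspace, and then invoke $A(n,k)$ in the contrapositive. Every surjective linear map $\varphi:\F_q^n\to\F_q^{n-k}$ has kernel a $k$-dimensional subspace $W$, with fibers $\varphi^{-1}(w)$ equal to the cosets $W+v$. Writing $\bP[\varphi(U_S)=w]=|S\cap\varphi^{-1}(w)|/|S|$ and using $|S|=q^{\delta n}$, a short calculation shows that the inequality $\minentq(\varphi(U_S))\geq\delta(n-k)-D_{n,k}$ is equivalent to the geometric statement that every coset of $W$ meets $S$ in at most $q^{\delta k+D_{n,k}}$ points.

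Next I would argue by contradiction: if $B(n,k)$ fails for some $\delta$ and some $S\subseteq\F_q^n$ with $|S|=q^{\delta n}$, then for every $k$-dimensional subspace $W$ there is a translate $W+v$ containing strictly more than $q^{\delta k+D_{n,k}}$ points of $S$. Since this intersection is an integer, it is at least $m:=\lfloor q^{\delta k+D_{n,k}}\rfloor+1\geq 1$, so $S$ is a $(k,m)$-Furstenberg set. The intersection is also bounded by $|W|=q^k$, which forces $q^{\delta k+D_{n,k}}<q^k$ and hence $m\leq q^k$, so $m$ lies in the valid Furstenberg range.

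Finally I would apply $A(n,k)$ to obtain
\[|S|\geq C_{n,k}\,m^{n/k}>C_{n,k}\,q^{(\delta k+D_{n,k})n/k}=C_{n,k}\,q^{\delta n}\cdot q^{D_{n,k}n/k}.\]
The choice $D_{n,k}=(k/n)\log_q(1/C_{n,k})$ is calibrated precisely so that $q^{D_{n,k}n/k}=1/C_{n,k}$, yielding $|S|>q^{\delta n}$ and contradicting $|S|=q^{\delta n}$. There is no deep obstacle here; the only mild subtlety is confirming that the integer $m$ we extract lands in the range $[1,q^k]$ so that $A(n,k)$ applies, and this is automatic once one notes that the contradiction hypothesis forces $q^{\delta k+D_{n,k}}<q^k$.
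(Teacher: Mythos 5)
Your proposal is correct and follows essentially the same contradiction argument as the paper: assume $B(n,k)$ fails, translate the min-entropy bound into a statement that every $k$-dimensional subspace has a heavy translate, conclude $S$ is $(k,m)$-Furstenberg with $m$ roughly $q^{\delta k + D_{n,k}}$, and invoke $A(n,k)$ to derive $|S| > q^{\delta n}$, a contradiction. The only difference is cosmetic: you are slightly more explicit about rounding $m$ to an integer and checking $m \le q^k$, a point the paper glosses over but which is easily verified (as you note) since a coset of a rank-$k$ subspace has only $q^k$ points.
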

\begin{proof}
Let $n > k$ and suppose in contradiction that $B(n,k)$ does not hold for the above $D_{n,k}$. This means that there exists a $\delta \in [0,1]$ and a  set $S \subset \F_q^n$  of size $|S| = q^{\delta n}$ such that for any onto linear map $\varphi : \F_q^n \mapsto \F_q^{n-k}$ we have $\minentq(\varphi(U_S)) < \delta(n-k) - D_{n,k}$. By the definition of min-entropy this means that for all $\varphi$ there must exist some $v = v_{\varphi} \in \F_q^{n-k}$ such that 
\begin{equation}\label{eq-varphi-me}
	\bP\left[ \varphi(U_S) = v_\varphi \right] = \frac{ | \varphi^{-1}(v_\varphi) \cap S |}{|S|} > \frac{q^{D_{n,k}}}{q^{\delta(n-k)}}.
\end{equation}
Let $K_\varphi \subset \F_q^n$ denote the $k$-dimensional kernel of $\varphi$. Then, (\ref{eq-varphi-me}) implies that there is a shift $w_{\varphi} \in \F_q^n$ so that
\begin{equation}
	| (K_{\varphi} + w_{\varphi}) \cap S | > |S|\cdot \frac{q^{D_{n,k}}}{q^{\delta(n-k)}} \geq q^{\delta k + D_{n,k}}
\end{equation}

Since $K_\varphi$ can be any $k$-dimensional linear subspace, $S$ is $(k,m)$-Furstenberg with $m> q^{\delta k + D_{n,k}}$. Since $A(n,k)$ holds with constant $C_{n,k}$ we get that
\begin{equation}
	|S| > C_{n,k} \cdot \left( q^{\delta k + D_{n,k}} \right)^{n/k} = C_{n,k}\cdot q^{\frac{n}{k}D_{n,k}} \cdot |S|.
\end{equation}
Cancelling $|S|$ from both sides and using the expression for $D_{n,k}$, we get a contradiction.
\end{proof}

The statement $B(n,k)$ is easily generalizable, with $U_S$ replaced by a general random variable. The generalization of the statement $B(n,1)$ can be proven using a simple generalization of the proof in \cite{dvir2013extensions}. This generalized statement will allow us to perform induction to prove Furstenberg set bounds.

\begin{restatable}[Entropic-Furstenberg bound]{theorem}{entropicFurstenbergRecurse}
\label{thm:entropicFurstenbergRecurse}
For any random variable $R$ supported over $\F_q^n$ there exists an onto linear map $\phi: \F^n_q\rightarrow \F^{n-k}_q$ such that 
$$\bH^q_\infty(\phi(R))\ge \frac{n-k}{n}\bH^q_\infty(R)-\log_q(2-q^{-1})k.$$
\end{restatable}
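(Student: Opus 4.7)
The plan is to prove Theorem~\ref{thm:entropicFurstenbergRecurse} by induction on the codimension $k$, with the $k=1$ case established by adapting the Dvir--Kopparty--Saraf--Sudan polynomial method of multiplicities to the weighted (entropic) setting. It is essential that the theorem is stated for an arbitrary random variable $R$, not only for uniform ones, since after projecting we obtain a non-uniform variable on the smaller space; without this generality the induction would not close.

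For the base case $k=1$, set $H = \bH^q_\infty(R)$ and $p_w = \bP[R = w]$, so $p_w \leq q^{-H}$. I would assume toward contradiction that every nonzero direction $v$ admits a line $L_v$ parallel to $v$ with $\bP[R \in L_v] > (2-q^{-1})\,q^{-(n-1)H/n}$. Using Lemma~\ref{lem:findPolyVanishMult}, I would construct a nonzero polynomial $P$ of small degree $d$ vanishing at each $w$ with multiplicity $N_w = \lfloor N p_w q^H \rfloor$, where $N$ is a large integer parameter. The point is that $N_w \leq N$ and
\[ \sum_w N_w^n \;\leq\; N^n q^{nH} \sum_w p_w^n \;\leq\; N^n q^H, \]
using $\sum_w p_w^n \leq (\max_w p_w)^{n-1} \leq q^{-(n-1)H}$; this lets $d$ be taken just above $N q^{H/n}$. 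For each direction $v$, restricting $P$ to $L_v$ and applying Lemma~\ref{lem:multSchwartz} produces two alternatives: either the restriction is not identically zero, so $d \geq \sum_{w \in L_v} N_w \gtrsim N q^H \, \bP[R \in L_v] > (2-q^{-1}) N q^{H/n}$, directly contradicting the bound on $d$; or the restriction vanishes formally, in which case the standard DKSS top-homogeneous-part argument forces $P_H$ to vanish at $v$. If the latter happens for every direction $v$, then $P_H$ vanishes on all of $\F_q^n$, and strengthening this via the Hasse-derivative identity (Lemma~\ref{lem:chainRule}) together with the multiplicity Schwartz--Zippel bound (Lemma~\ref{lem:multSchwartz}) yields the contradiction as $N \to \infty$.

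For the inductive step $k \geq 2$, I would apply the base case to $R$ to obtain an onto $\phi_1: \F_q^n \to \F_q^{n-1}$ with $\bH^q_\infty(\phi_1(R)) \geq \tfrac{n-1}{n} H - \log_q(2-q^{-1})$, and then apply the inductive hypothesis with codimension $k-1$ to the random variable $\phi_1(R)$ on $\F_q^{n-1}$, obtaining an onto $\phi_2: \F_q^{n-1} \to \F_q^{n-k}$ satisfying
\[ \bH^q_\infty(\phi_2(\phi_1(R))) \;\geq\; \tfrac{n-k}{n-1}\, \bH^q_\infty(\phi_1(R)) - (k-1)\log_q(2-q^{-1}). \]
Composing, $\phi := \phi_2 \circ \phi_1$ is onto as a composition of ontos, and chaining the two inequalities gives
\[ \bH^q_\infty(\phi(R)) \;\geq\; \tfrac{n-k}{n} H - \left(\tfrac{n-k}{n-1} + (k-1)\right)\log_q(2-q^{-1}) \;\geq\; \tfrac{n-k}{n} H - k\log_q(2-q^{-1}), \]
since $\tfrac{n-k}{n-1} \leq 1$. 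The main obstacle will be the base case: carefully reconciling the integrality of the multiplicities $N_w$, the two alternatives for the line restriction, and the DKSS top-part argument so that exactly the constant $2 - q^{-1}$ emerges and not a weaker one.
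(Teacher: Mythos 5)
Your proposal is correct and follows essentially the same two-part architecture as the paper: for the base case $k=1$, use the method of multiplicities to find a direction along which projecting preserves relative min-entropy up to the loss $\log_q(2-q^{-1})$; for general $k$, compose onto maps obtained from the base case and induction. The inductive step is sound -- you compose ``polynomial method first, then induction,'' whereas the paper composes in the opposite order, but the arithmetic $\frac{n-k}{n-1} + (k-1) \le k$ (or the paper's analogue) closes cleanly either way. Your observation that the theorem must be stated for arbitrary $R$ so the induction can recurse is exactly right and is the whole point of the entropic reformulation.

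Where the paper genuinely diverges is in how it organizes the base case, and the difference is worth noting because it dissolves precisely the ``main obstacle'' you flag at the end. Rather than applying the multiplicity machinery directly to the probability vector $(p_w)$ with floor-rounded multiplicities $N_w = \lfloor N p_w q^H\rfloor$, the paper first reduces (via rational $p_w$ and a limit) to an $\ell^n$-norm inequality for integer-valued $f:\F_q^n \to \Z$ with a direction-wise $\ell^1$ line bound $r$: namely $\sum_x |f(x)|^n \ge r^n/(2-q^{-1})^n$. Working with integers lets the paper tie the multiplicity scale $N = m(2q-1)/r$ and degree $d = mq-r$ to a single free parameter $m \to \infty$, so the dimension count and the final contradiction $mq \le d = mq - r$ become one-line arithmetic with no hidden slack; then Theorem~\ref{thm:entropicKakeya} follows by the elementary chain $\sum_x f(x) f(v)^{n-1} \ge \sum_x f(x)^n$. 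In your direct version you must carefully carry the strict inequality in $\bP[R\in L_v] > (2-q^{-1})q^{-(n-1)H/n}$ and the floor errors through the estimate $mq > d$, which works but takes more bookkeeping as $N\to\infty$. One small precision: in the last step of your base case, ``$P_H$ vanishes on all of $\F_q^n$'' alone only gives $q \le d$, which is no contradiction; the actual argument must run the same two-alternative dichotomy on every Hasse derivative $P^{(\alpha)}$ with $\mathrm{wt}(\alpha) < m$ to push the multiplicity of $P^H$ up to $m$ at every point of $\F_q^n$, and only then does Lemma~\ref{lem:multSchwartz} yield $mq \le d$ against $mq > d$. You clearly intend this (you cite Lemma~\ref{lem:chainRule}), but the phrase ``strengthening'' undersells the need to redo the line-restriction dichotomy per derivative rather than post-processing a single vanishing statement.
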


Theorem~\ref{thm:FurstenbergRecurse} follows easily from Theorem~\ref{thm:entropicFurstenbergRecurse}.

\begin{proof}[Proof of Theorem~\ref{thm:FurstenbergRecurse}]
Theorem~\ref{thm:entropicFurstenbergRecurse} proves the statement $B(n,k)$ with constant $D_{n,k}=k\log_q(2)$. Lemma \ref{lem-BtoA} then proves Theorem \ref{thm:FurstenbergRecurse}.
\end{proof}

We will prove Theorem \ref{thm:entropicFurstenbergRecurse} using the polynomial method for the case $k=1$ and the general case will follow from an inductive argument by composing a sequence of onto maps. For that reason, we restate the $k=1$ case separately.

\begin{restatable}[Entropic bound for $k=1$]{theorem}{entropicKakeya}
\label{thm:entropicKakeya}
For any random variable $R$ supported over $\F_q^n$ there exists an onto linear map $\phi: \F^n_q\rightarrow \F^{n-1}_q$ such that 
$$\bH^q_\infty(\phi(R))\ge \frac{n-1}{n}\bH^q_\infty(R)-\log_q(2-q^{-1}).$$
\end{restatable}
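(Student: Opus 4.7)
The plan is to prove Theorem \ref{thm:entropicKakeya} by generalizing the polynomial method with multiplicities underlying the Kakeya bound $K(q,n,1,q) \ge q^n/(2-q^{-1})^n$ to a weighted setting reflecting the probability mass function of $R$. Set $p_w = \bP[R = w]$ and $H = \bH^q_\infty(R)$, so $p_w \le q^{-H}$ for all $w$. We argue by contradiction: assume that for every direction $v \in \F_q^n\setminus\{0\}$ there is an affine line $L_v = \langle v \rangle + w_v$ with $\bP[R \in L_v] > (2-q^{-1})q^{-(n-1)H/n}$. Fix a large integer $M$ (to be taken to infinity) and assign each $w$ the multiplicity $N_w := \lfloor Mp_w q^H \rfloor \in \{0, 1, \ldots, M\}$.

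The bounds $\sum_w N_w^{n-i} \le M^{n-i} q^H$ for $0 \le i \le n-1$ follow from $p_w \le q^{-H}$ together with $\sum_w p_w = 1$, and combine (using that $\binom{k+n-1}{n}$ is a degree-$n$ polynomial in $k$ with leading coefficient $1/n!$) to give
$$\sum_w \binom{N_w + n - 1}{n} \le \frac{M^n q^H}{n!}\bigl(1 + O_n(1/M)\bigr),$$
so Lemma \ref{lem:findPolyVanishMult} produces a nonzero polynomial $P \in \F_q[x_1, \ldots, x_n]$ of degree $d$ vanishing at each $w$ with multiplicity $N_w$ whenever $d$ is only slightly larger than $M q^{H/n}$. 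For each $j \in \Z^n_{\ge 0}$, Lemma \ref{lem:multComp} shows that $P^{(j)}(w_v + tv)$ is a univariate polynomial in $t$ of degree $\le d-|j|$ vanishing at each $t \in \F_q$ with multiplicity $\ge N_{w_v + tv} - |j|$, for a total exceeding $Mq^H \bP[R \in L_v] - (|j|+1)q > M\beta - (|j|+1)q$ with $\beta := (2-q^{-1})q^{H/n}$. When $|j| < (M\beta - q - d)/(q-1)$, this total exceeds the degree and $P^{(j)}(w_v + tv) \equiv 0$ in $t$; inspecting the leading coefficient in $t$ gives $(P_d)^{(j)}(v) = 0$ for the homogeneous leading part $P_d$. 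Hence $P_d$ vanishes at every $v \in \F_q^n$ with multiplicity at least $J := \lceil (M\beta - q - d)/(q-1)\rceil$.

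Finally, Lemma \ref{lem:multSchwartz} applied to $P_d$ (nonzero of degree $\le d$) gives $Jq^n \le dq^{n-1}$, equivalent to $d \ge (qM\beta - q^2)/(2q-1)$. The identity $q(2-q^{-1}) = 2q-1$ makes this lower bound equal $Mq^{H/n} - q^2/(2q-1)$ at the critical value $\beta = (2-q^{-1})q^{H/n}$, clashing with the polynomial-existence bound $d \le Mq^{H/n}(1+o_M(1))$ only when $\beta > (2-q^{-1})q^{H/n}$ strictly. Since our hypothesis is strict, the two ranges become incompatible for $M$ sufficiently large, yielding the desired contradiction. The main obstacle will be the asymptotic bookkeeping: since the leading terms in both constraints on $d$ cancel exactly at the threshold, all margin must come from the strict hypothesis, while both the $O_n(1/M)$ error in $\sum_w \binom{N_w + n - 1}{n}$ and the $-q^2/(2q-1)$ correction from Schwartz-Zippel must be controlled carefully to preserve the sharp constant $\log_q(2-q^{-1})$.
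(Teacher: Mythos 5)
Your proposal runs the same core engine as the paper --- the polynomial method with multiplicities (Lemmas \ref{lem:chainRule}, \ref{lem:multComp}, \ref{lem:multSchwartz}, \ref{lem:findPolyVanishMult}), choosing vanishing multiplicities proportional to the probability mass, restricting Hasse derivatives to lines to show the leading homogeneous part vanishes everywhere, and closing with Schwartz--Zippel --- but organizes it differently. The paper first isolates a clean $\ell^n$-norm Kakeya bound for integer-valued functions (Theorem \ref{thm:normBound}), tuning the parameters $N = m(2q-1)/r$ and $d = mq - r$ so that the Schwartz--Zippel step yields the exact, non-asymptotic contradiction $mq \le mq - r$; the limit $m\to\infty$ is then applied only to convert inequality \eqref{eq:keyBound} into a bound on $\|f\|_{\ell^n}^n$, and the entropic statement reduces to Theorem \ref{thm:normBound} via the observation $\sum_x f(x) f(v)^{n-1} \ge \sum_x f(x)^n$. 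You instead set $N_w = \lfloor M p_w q^H\rfloor$ and sandwich the degree $d$ between a Schwartz--Zippel lower bound and a polynomial-existence upper bound whose leading-order terms in $M$ cancel exactly. This does work in principle, but it forces the entire margin to come from the strictness of the contradiction hypothesis: one should fix $\eta := \min_v q^H\bP[R\in L_v] - (2-q^{-1})q^{H/n} > 0$ (your $\beta$ is silently overloaded in the final paragraph, standing for both the threshold $(2-q^{-1})q^{H/n}$ and this minimum), track the linear-in-$M$ term $qM\eta/(2q-1)$ it contributes to the lower bound on $d$, and verify that it eventually dominates the $O_{n,q,H}(1)$ corrections on the other side. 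You flag this bookkeeping as an obstacle and do not carry it out; it is doable, but it is exactly the step the paper's choice of $N$ and $d$ is designed to bypass, which is why the constant $\log_q(2-q^{-1})$ emerges more cleanly there.
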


\section{Proof of the entropic bound when $k=1$}\label{sec:entropykakeya}

We will prove Theorem \ref{thm:entropicKakeya} by first proving an estimate for the $\ell^n$ norm of integer valued functions over $\F^n_q$ and reducing Theorem \ref{thm:entropicKakeya} to it.

\begin{theorem}\label{thm:normBound}
Given $r\in \Z_{\ge 0}$ and a function $f:\F^n_q\rightarrow \Z$ such that for every direction $\gamma$ there exists a line $E_\gamma$ in that direction such that $\sum_{x\in E_\gamma }|f(x)| \ge r$ we have the following bound,
$$ \|f\|_{\ell^n}^n=\sum\limits_{x\in \F^n_q} |f(x)|^n\ge \frac{r^n}{\left(2-q^{-1}\right)^n}.$$
\end{theorem}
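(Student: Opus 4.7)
The plan is to follow the method of multiplicities from Section~\ref{prelim:multiplicities}, suitably adapted to this weighted setting, arguing by contradiction. Suppose $\|f\|_{\ell^n} < r/(2-q^{-1})$, fix a large integer $N$ (to be chosen), and set $d = \lceil Nr/(2-q^{-1}) \rceil - 1$. The first step is to apply Lemma~\ref{lem:findPolyVanishMult} with the multiplicity assignment $N_x = N|f(x)|$ to obtain a nonzero polynomial $P$ of total degree at most $d$ vanishing at each $x$ with multiplicity at least $N|f(x)|$. Existence requires $\sum_x \binom{N|f(x)|+n-1}{n} < \binom{d+n}{n}$; as $N \to \infty$ the left side equals $(1+o(1))\, N^n \|f\|_{\ell^n}^n/n!$ while the right side equals $(1-o(1))(Nr/(2-q^{-1}))^n/n!$, so the strict contradiction hypothesis makes this inequality hold for all sufficiently large $N$. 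Write $d' \leq d$ for the actual degree of $P$ and $P_{d'}$ for its nonzero homogeneous top-degree part.

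The second step is a Hasse-derivative analysis on each direction's line. Fix a direction $\gamma$ and any multi-index $i$ with $(q-1)|i| < Nr - d'$. Standard properties of Hasse derivatives give $\text{mult}(P^{(i)}, x) \geq N|f(x)| - |i|$ whenever this is nonnegative, so via Lemma~\ref{lem:multComp} the univariate polynomial $P^{(i)}(a_\gamma + t\gamma)$, of degree at most $d' - |i|$, has total vanishing multiplicity at least
\[
\sum_{x \in E_\gamma} \max\bigl(0,\, N|f(x)| - |i|\bigr) \;\geq\; Nr - q|i|,
\]
which strictly exceeds $d' - |i|$ by the choice of $i$, so this univariate polynomial is identically zero. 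Its $t^{d'-|i|}$-coefficient equals $(P_{d'})^{(i)}(\gamma)$, since the degree-$(d'-|i|)$ homogeneous part of $P^{(i)}$ is exactly $(P_{d'})^{(i)}$ (Hasse differentiation lowers the degree of the top homogeneous piece by exactly $|i|$). Running over all valid $i$ gives $\text{mult}(P_{d'}, \gamma) \geq M := \lceil (Nr - d')/(q-1) \rceil$ for every nonzero $\gamma \in \F_q^n$, using homogeneity of $P_{d'}$ to cover scalar multiples.

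To finish, I apply Lemma~\ref{lem:multSchwartz} to $P_{d'}$ with $U = \F_q$. Homogeneity of $P_{d'}$ forces $\text{mult}(P_{d'}, 0) \geq d'$, and combined with the direction bound this yields
\[
d' + (q^n - 1) M \;\leq\; \sum_{a \in \F_q^n} \text{mult}(P_{d'}, a) \;\leq\; d' q^{n-1}.
\]
Substituting $M \geq (Nr - d')/(q-1)$ and rearranging gives $Nr(q^n - 1) \leq d'(2q^n - q^{n-1} - q)$; the identity $(q^n - 1)(2q - 1) - q(2q^n - q^{n-1} - q) = (q-1)^2 \geq 0$ then yields $(q^n - 1)/(2q^n - q^{n-1} - q) \geq 1/(2 - q^{-1})$, so $d' \geq Nr/(2 - q^{-1})$, contradicting $d' \leq d < Nr/(2 - q^{-1})$. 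The main delicate point I anticipate is the Hasse-derivative bookkeeping --- verifying the top-coefficient identity $(P^{(i)})_{d' - |i|} = (P_{d'})^{(i)}$ and making the $N \to \infty$ asymptotics of the two binomial sums rigorous --- but once those are settled the dimensional argument above is forced.
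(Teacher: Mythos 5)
Your proposal is correct and follows essentially the same route as the paper: interpolate a polynomial $P$ vanishing at $x$ with multiplicity $\sim N|f(x)|$, take Hasse derivatives along each direction's good line to force the top homogeneous part to vanish with high multiplicity at every nonzero point, and close with Schwartz--Zippel with multiplicities; the parametrizations differ (the paper fixes $m$ a multiple of $r$, sets $d = mq-r$, $N = m(2q-1)/r$, proves the binomial-count inequality directly and then lets $m\to\infty$, whereas you argue by contradiction and choose $N$ large), but these are bookkeeping choices for the same argument. One tiny omission: you should dispose of $r=0$ separately (the claim is trivial there, and your multi-index range is empty), but otherwise the reasoning matches the paper's.
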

Note, if $f$ is an indicator function for a subset of $\F^n_q$ and $r=q$ then the theorem above is simply the Kakeya bound in \cite{dvir2013extensions}.
Also note that this theorem can easily be generalized to real valued functions and positive real $r$ by taking ratios and limits.

Our proof is a simple modification of the proof of the Kakeya theorem in~\cite{dvir2013extensions}. 
A more general Kakeya estimate appears in~\cite{ellenberg_oberlin_tao_2010}, but with a larger constant in place of $(2-q^{-1})^n$.

\begin{proof}[Proof of Theorem~\ref{thm:normBound}]

Fix $m$ to be a positive multiple of $r$. Let $d=mq$ where and $N=m(2q-1)/r$. It suffices to prove the following for large enough values of $m$:

\begin{align}\sum\limits_{x\in \F^n_q}\binom{N|f(x)|+n-1}{n}\ge \binom{d+n}{n}.\label{eq:keyBound}
\end{align}
Indeed, dividing by $\binom{d+n}{n}$ on both sides and substituting for $d$ and $N$ gives us

$$\sum\limits_{x\in \F^n_q} \frac{((2q-1)m|f(x)|/r+n-1)\hdots((2q-1)m|f(x)|/r)}{(mq+n)\hdots (mq+1)}\ge 1.$$
As $m$ can be arbitrarily large, we let it grow towards infinity which gives us
$$\sum\limits_{x\in \F^n_q}|f(x)|^n\ge \frac{r^n}{(2-q^{-1})^{n}},$$
which is exactly what we want to prove. Hence, we only need to prove \eqref{eq:keyBound} now.

Suppose that \eqref{eq:keyBound} is false. Using Lemma \ref{lem:findPolyVanishMult}, we can find a non-zero polynomial $P$ of total degree at most $d$ such that it vanishes on each point $x$ of $\F^n_q$ with multiplicity $N|f(x)|$.

Let $P^H$ refer to the homogenous part of $P$ of highest degree. We make the following claim.

\begin{claim}
For all $b\in \F^n_q$,
$$\text{mult}(P^H,b)\ge m.$$
\end{claim}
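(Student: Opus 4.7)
The plan is to fix any direction $b \in \F_q^n$ and force every Hasse derivative of $P^H$ of weight less than $m$ to vanish at $b$. To do this, I would restrict the Hasse derivatives $P^{(j)}$ of $P$ to the rich line $E_b$ guaranteed by the hypothesis on $f$, show that this univariate restriction has too many zeros (counted with multiplicity) to be nonzero, and then identify its leading coefficient with $(P^H)^{(j)}(b)$.

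First, fix $b$ and write $E_b = \{a + tb : t \in \F_q\}$ for the line in direction $b$ with $\sum_{x \in E_b}|f(x)| \geq r$. For each multi-index $j \in \Z^n_{\geq 0}$, the vanishing of $P$ to order $N|f(x)|$ at every $x$, together with the identity $(P^{(j)})^{(i)} = \binom{i+j}{i} P^{(i+j)}$ from Lemma \ref{lem:chainRule}, gives $\text{mult}(P^{(j)}, x) \geq N|f(x)| - \text{wt}(j)$ at every $x \in \F_q^n$.

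Next, I would form the univariate restriction $Q_j(t) := P^{(j)}(a + tb)$, which has degree at most $d - \text{wt}(j)$. Lemma \ref{lem:multComp} transports the multiplicity estimate to $Q_j$, yielding
\[\sum_{t \in \F_q} \text{mult}(Q_j, t) \ \geq \ \sum_{x \in E_b}\bigl(N|f(x)| - \text{wt}(j)\bigr) \ \geq \ Nr - \text{wt}(j)\cdot q.\]
Plugging in $d = mq - r$ and $N = m(2q-1)/r$, a short arithmetic check shows that this lower bound strictly exceeds $d - \text{wt}(j)$ whenever $\text{wt}(j) < m + r/(q-1)$, and in particular whenever $\text{wt}(j) < m$. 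The univariate Schwartz--Zippel bound with multiplicity (Lemma \ref{lem:multSchwartz}) then forces $Q_j \equiv 0$ for every such $j$.

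Finally, I would extract the coefficient of the top power $t^{d - \text{wt}(j)}$ of $Q_j$. Expanding $P^{(j)}(a + tb) = \sum_i (P^{(j)})^{(i)}(a)\, b^i\, t^{\text{wt}(i)}$, only multi-indices $i$ with $\text{wt}(i) + \text{wt}(j) = d$ contribute to this top coefficient, and for such $i$ the constant $(P^{(j)})^{(i)}(a) = \binom{i+j}{i} P^{(i+j)}(a)$ reduces to $\binom{\ell}{j} c_\ell$, where $\ell = i+j$ has $\text{wt}(\ell) = d$ and $c_\ell$ is the coefficient of $x^\ell$ in $P$. Re-indexing the sum by $\ell$ identifies this leading coefficient with $(P^H)^{(j)}(b)$. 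Since $Q_j \equiv 0$, this gives $(P^H)^{(j)}(b) = 0$ for every $j$ with $\text{wt}(j) < m$, which is exactly $\text{mult}(P^H, b) \geq m$. The main obstacle I anticipate is this final coefficient identification: one has to carefully track the Hasse-derivative composition identity and exploit the fact that only the degree-$d$ monomials of $P$ -- that is, $P^H$ -- contribute terms of top degree along the line, which is what makes the homogeneous top part appear.
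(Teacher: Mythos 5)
Your proposal is correct and follows essentially the same route as the paper's proof: differentiate, restrict to the rich line, lower-bound multiplicities via Lemmas \ref{lem:chainRule} and \ref{lem:multComp}, kill the restriction with Lemma \ref{lem:multSchwartz}, and read off $(P^H)^{(j)}(b)$ as the top coefficient of $Q_j(t)$. The only wrinkle you omit is the case $b=0$, which is not a direction and needs the trivial observation that $P^H$ is homogeneous of degree $d>m$ so vanishes at the origin to order $d$; otherwise your arithmetic threshold $\text{wt}(j)<m+r/(q-1)$ and your coefficient identification $\sum_{\text{wt}(\ell)=d,\,\ell\geq j}\binom{\ell}{j}c_\ell b^{\ell-j}=(P^H)^{(j)}(b)$ are both exactly right and match what the paper does (the paper just phrases the last step as "$(P^H)^{(\alpha)}$ is the top homogeneous part of $Q$").
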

\begin{proof}
It is easy to see the statement is true for $b=0$ because $P^H$ is a homogenous polynomial of degree $d>m$.

Recall, for any $\alpha \in \Z_{\ge 0}^n$ its weight is defined as the sum of its coordinates. Fix any $\alpha\in \Z_{\ge 0}$ such that $\text{wt}(\alpha)=m'<m$. Let us consider $Q=P^{(\alpha)}$, that is, the $\alpha$th Hasse derivative of $P$. $Q$ has degree at most $d-m'$ and vanishes on every $x$ with multiplicity $\max(N|f(x)|-m',0)$. For any direction $b\in \F^n_q \setminus \{0\}$, we can find a point $a\in \F^n_q$ such that the line $L=\{x:x=a+bt,t\in \F^n_q\}$ satisfies
\begin{align}
    \sum_{x \in L} |f(x)| \geq r.
\end{align}
This implies
\begin{align}\label{eq:multPoly1}
\sum\limits_{x\in L} \text{mult}(Q,x)\ge \sum\limits_{x\in L} \max(N|f(x)|-m',0)\ge Nr-qm'.  \end{align}

Let $Q_{a,b}(t) = Q(a + bt)$.
Then $Q_{a,b}$ is a univariate polynomial of degree at most $d-m'$.
Lemma \ref{lem:multComp} and \eqref{eq:multPoly1} implies
\begin{align}
    \sum\limits_{t\in \F_q} \text{mult}(Q_{a,b},t)\ge \sum\limits_{x\in L} \text{mult}(Q,x)\ge Nr-qm'.\label{eq:multpoly2}
\end{align}

If $Q_{a,b}$ is non-zero then Lemma \ref{lem:multSchwartz} and \eqref{eq:multpoly2} give us the bound $Nr-qm'\le d-m'$, which implies $m(q-1)\le m'(q-1)$. This leads to a contradiction, proving that $Q(a+bt)$ is identically zero. We note $(P^H)^{(\alpha)}$ is precisely the homogenous part of highest degree of $Q$. $Q(a+bt)$ being identically zero implies $(P^H)^{(\alpha)}$ vanishes on $b$. This proves the claim.
\end{proof}

Putting everything together we now know that $P^H$, which has total degree at most $d$, vanishes on all values in $\F^n_q$ with multiplicity at least $m$. Lemma \ref{lem:multSchwartz} now implies that $mq \le d$, leading to a contradiction. This finishes the proof of the Theorem.
\end{proof}

We are now ready to prove Theorem \ref{thm:entropicKakeya}.

\begin{proof}[Proof of Theorem \ref{thm:entropicKakeya}]
We will prove this theorem for random variables $R$ such that $\bP(R=x)$ is a rational number for all $x\in \F^n_q$. After a simple limiting argument we will obtain the statement for all random variables $R$. As mentioned earlier, we will reduce to Theorem \ref{thm:normBound}. We let $\bP(R=w)=f(x)/S$ for some positive integer $S$ and non-negative integer $f(x)$ for all $x\in \F^n_q$. It is clear that $S=\sum_{x\in \F^n_q} f(x)$.

We note $\bH^q_\infty(R)$ is simply going to be $-\log_q(f(v)/S)$ where $v\in \F^n_q$ is the mode of $R$. 

Given any onto linear map $\phi:\F^n_q\rightarrow \F_q^{n-1}$, its kernel is some line passing through the origin with direction $\gamma$. 
It is easy to check that, for every $x \in \mathbb{F}_q^{n-1}$, $\bP(\phi(R)=x)$ is obtained by summing $\bP(R=y)$ over all $y$ in the line through $x$ in direction $\gamma$.

Let $\bL_\gamma$ be the set of lines in direction $\gamma$. This means we can write $\bH^q_\infty(\phi(R))$ as
$$\bH^q_\infty(\phi(R))=-\log_q\left(\max\limits_{\ell \in \bL_\gamma} \sum\limits_{x\in \ell} \bP(R=x)\right).$$

We now pick the $\phi$ for which $\bH^q_\infty(\phi(R))$ is the largest. This is basically done by picking the direction $\gamma$ such that $\max_{\ell \in \bL_\gamma} \sum\limits_{x\in \ell} \bP(R=x)$ is the smallest. 
Let $\gamma_0$ be that direction and $\max_{\ell \in \bL_{\gamma_0}} \sum\limits_{x \in \ell} \bP(R=x)$ equals $r/S$ where $r$ is some non-negative integer. We can now re-write the statement of the Theorem as follows:

\begin{align*}
    &-\log_q\left(\frac{r}{S}\right)\ge -\frac{n-1}{n}\log_q\left(\frac{f(v)}{S}\right)-\log_q(2-q^{-1})\\
    \iff& \frac{S}{r}\ge \frac{1}{2-q^{-1}}\left(\frac{S}{f(v)}\right)^{1-1/n}\\
    \iff& \left(\frac{S}{f(v)}\right)^{1/n}\ge   \frac{1}{2-q^{-1}}\frac{r}{f(v)}\\
    \iff& \sum\limits_{x\in \F^n_q} f(x)f(v)^{n-1}\ge   \frac{1}{(2-q^{-1})^n}r^n\numberthis \label{eq:KakeyaMult}
\end{align*}

Noting that $f(v)\ge f(x)\ge 0$ for all $x$, \eqref{eq:KakeyaMult} immediately follows from Theorem \ref{thm:normBound}.
\end{proof}

\section{Proving the general entropic bound}\label{sec:entorpyfurst}

Let us first prove Theorem \ref{thm:entropicFurstenbergRecurse} which is obtained from Theorem \ref{thm:entropicKakeya} by a simple recursion.

\begin{proof}[Proof of Theorem \ref{thm:entropicFurstenbergRecurse}]
We induct over $k$. Theorem \ref{thm:entropicKakeya} is precisely the case $k=1$. Now, let it be true for some fixed $k$. This means given any random variable $R$ supported over $\F_q^n$ we can find an onto random variable $\phi:\F_q^n\rightarrow \F_q^{n-k}$ such that,
\begin{align}
    \bH^q_\infty(\phi(R))\ge \frac{n-k}{n}\bH^q_\infty(R)-\log_q(2-q^{-1})k.\label{eq:entrkInduct}
\end{align}
Applying Theorem \ref{thm:entropicKakeya} on $\phi(R)$ we can find another onto function $\psi:\F_q^{n-k}\rightarrow \F_q^{n-k-1}$ such that,
\begin{align}
    \bH^q_\infty(\psi(\phi(R))) \ge \frac{n-k-1}{n-k}\bH^q_\infty(\phi(R))-\log_q(2-q^{-1}).\label{eq:entrk1}
\end{align}
Substituting \eqref{eq:entrkInduct} in \eqref{eq:entrk1} proves the required statement.
\end{proof}

\section{Better bounds when $n$ is divisible by $k$}

In this section we will prove Theorem \ref{thm:FurstenbergDiv} which gives us much better bounds in the case when $n$ is divisible by $k$.

\begin{proof}[Proof of Theorem \ref{thm:FurstenbergDiv}]\label{sec:divide}
As $k$ is a factor of $n$ we can find a positive integer $r$ such that $n=rk$. Note there exists an $\F_q$-linear isomorphism between $\F_q^{n}$ and $\F_{q^k}^{r}$. This quickly follows from the fact $\F_{q^k}$ is by definition $\F_q[x]/I$ where $I$ is a principal ideal generated by a degree $k$ irreducible polynomial in $\F_q[x]$. This allows us to treat a point set $S$ in $\F_q^n$ as a point set in $\F_{q^k}^r$. It is easy to see that any line in $\F_{q^k}^r$ is a $k$-dimensional subspace in $\F_q^n$. This means $S$ is a Kakeya set in $\F_{q^k}^r$. Using the Kakeya bound \eqref{eqn:generalMKakeya} we have,
$$|S|\ge \frac{1}{2^{n/k}}m^{n/k},$$
which is precisely what we wanted.
\end{proof}

One could use a similar argument to prove bounds in the style of Theorem \ref{thm:entropicFurstenbergRecurse} with better constants. In fact, when $n-k$ has a factor smaller than $k$ we can combine the recursive argument of Theorem \ref{thm:entropicFurstenbergRecurse} and argument presented in this section to obtain slightly better constants for Furstenberg set bounds.

\section{Proof of Theorems \ref{th:largeMFurst} and \ref{th:pureIncidences}}\label{sec:geometric}
	
	We start by proving three lemmas.
	The proof of Theorem \ref{th:pureIncidences} depends only on Lemma \ref{th:heavyFlatsCoverManyPoints}.
	The other two lemmas are only needed in the proof of Theorem \ref{th:largeMFurst}.
	
	The first lemma shows that a set of flats witnessing a Furstenberg set contains many flats of lower dimension.
	
	\begin{lemma}\label{th:kakeyaForFlats}
		Let $F$ be a set of $k$-flats in $\F_q^n$, one parallel to each rank $k$ subspace, with $2 \leq k < n$.
		Let $1 \leq \ell < k$.
		The number of $\ell$-flats contained in the flats of $F$ is at least $\binom{n}{\ell}_q K(q,n-\ell,k-\ell,q^{k-\ell})$.
	\end{lemma}
	\begin{proof}
	    The basic observation behind this lemma is that the $\ell$-flats that are contained in flats of $F$ and are parallel to a fixed rank $\ell$ subspace correspond to the points of a $(k-\ell, q^{k-\ell})$-Furstenberg set in $\mathbb{F}_q^{n-\ell}$.
	    The bound in the conclusion of the lemma comes from summing over all rank $\ell$ subspaces of $\F_q^n$.
	    
		For each rank $\ell$ subspace $\Lambda$, choose a rank $n-\ell$ subspace $P_\Lambda$ so that $\Lambda \cap P_\Lambda$ is the origin.
        Since $\dim(\Lambda \cap P_\Lambda) = 0$, equation (\ref{eq:dimSpan}) implies that $\overline{\Lambda,P_\Lambda} = \F_q^n$.
		Let $F_\Lambda \subset F$ be the set of flats of $F$ that contain a translate of $\Lambda$.
		We will show that $K_\Lambda = \bigcup_{\Gamma \in F_{\Lambda}} (\Gamma \cap P_{\Lambda})$ is a $(k-\ell, q^{k-\ell})$-Furstenberg set in $P_\Lambda$.
		
		Let $g$ be the map from $k$-dimensional subspaces of $\F_q^n$ that contain $\Lambda$ to $(k-\ell)$-dimensional subspaces of $P_\Lambda$ defined by $g(\Gamma) = P_\Lambda \cap \Gamma$.
		Since $\overline{\Gamma, P_\Lambda} = \F_q^n$ for any subspace $\Gamma$ that contains $\Lambda$, (\ref{eq:dimSpan}) implies that $g$ is well-defined.
		In addition, any rank $k-\ell$ subspace $H$ contained in $P_\Lambda$ intersects $\Lambda$ only at the origin, so $\dim(\overline{\Lambda, H}) = k$.
		Consequently, $g$ is bijective.
		
		Let $v \in \F_q^n$ be arbitrary.
		Let $v_\Lambda$ and $v_{P_\Lambda}$ so that $v = v_\Lambda + v_{P_\Lambda}$, where $v_\Lambda \in \Lambda$ and $v_{P_\Lambda} \in P_\Lambda$.
		Since $\overline{\Lambda,P_\Lambda} = \F_q^n$, this is always possible.
		Let $\Gamma$ be a rank $k$ subspace that contains $\Lambda$.
		Then,\[
		    (\Gamma + v) \cap P_\Lambda = (\Gamma + v_{P_\Lambda})\cap P_\lambda 
		    =(\Gamma + v_{P_\Lambda}) \cap (P_\Lambda + v_{P_\Lambda})
		    = \Gamma \cap P_\Lambda + v_{P_\Lambda}.\]
		
		We are now ready to show that $K_\Lambda$ is a $(k-\ell, q^{k-\ell})$-Furstenberg set.
		Let $H$ be a $(k-\ell)$-dimensional subspace contained in $P_\Lambda$.
		By the hypothesis on $F$, there is $v \in \F_q^n$ such that $g^{-1}(H) + v \in F_\Lambda$.
		Hence, $H + v_{P_\Lambda} \subseteq K_\Lambda$.
		
		By definition, $|K_\Lambda| \geq K(q,n-\ell,k-\ell, q^{k-\ell})$.
		Each point in $K_\Lambda$ is the intersection of $P_\Lambda$ with an $\ell$-flat parallel to $\Lambda$ that is contained in some flat of $F$.
		So, the set $L_\Lambda$ of $\ell$-flats parallel to $\Lambda$ and contained in $k$-flats of $F$ is in $1$-$1$ correspondence with the set $K_\Lambda$.
		Hence,
		\[ \sum_{\Lambda} |L_\Lambda| = \sum_{\Lambda} |K_\Lambda| \geq \binom{n}{\ell}_q K(q,n-\ell,k-\ell,q^{k-\ell}), \]
		where $\Lambda$ ranges over all rank $\ell$ subspaces of $\F_q^n$.
	\end{proof}

	For the proof of Theorem \ref{th:largeMFurst}, we only need the case $\ell = k-1$ of Lemma \ref{th:kakeyaForFlats}.
	The application of (\ref{eq:kakeya}) to obtain an explicit bound on $K(q,n,1,q)$ for use with Lemma \ref{th:kakeyaForFlats} is the only application in this section of any result proved using the polynomial method.

	\begin{lemma}\label{th:kakeyaBecks}
	    Let $2 \leq k < n$. 
		Let $S$ be a $(k,m)$-Furstenberg set in $\F_q^n$.
		Let $\delta < 1$.
		Let $G_r$ be the set of $(k-1)$-flats that are each incident to at least $r = \delta mq^{-1} + 1$ points of $S$.
		If $m \geq 2^{n+3-k} q (1-\delta)^{-2}$, then $|G_r| > 2^{k-2-n}q^{n-k+1} \binom{n}{k-1}_q$.
	\end{lemma}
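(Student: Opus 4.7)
The plan is to combine Lemma \ref{th:kakeyaForFlats} with the poor-flat bound of Lemma \ref{th:becks} via a short double-counting argument. The former produces a large collection $T$ of $(k-1)$-flats each lying inside a witness $k$-flat; the latter shows that, within any single witness $k$-flat, only a tiny fraction of its $(k-1)$-flats can be poor; subtracting these leaves enough rich $(k-1)$-flats to match the claimed lower bound on $|G_r|$.

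Concretely, I would first fix a collection $F$ of $\binom{n}{k}_q$ $k$-flats, one parallel to each rank-$k$ subspace of $\F_q^n$, each meeting $S$ in at least $m$ points. Let $T$ be the set of $(k-1)$-flats contained in some element of $F$. Applying Lemma \ref{th:kakeyaForFlats} with $\ell = k-1$ and substituting the Kakeya lower bound \eqref{eq:kakeya} for $K(q,n-k+1,1,q)$ yields $|T| \ge 2^{k-1-n}\, q^{n-k+1}\binom{n}{k-1}_q$.

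For each $\Gamma \in F$, identify $\Gamma$ with $\F_q^k$ and apply Lemma \ref{th:becks} to $S \cap \Gamma$ with $\ell = k-1$. The subtlety here --- which I expect to be the main obstacle --- is that $|S \cap \Gamma|$ may strictly exceed $m$, so I would set $m' = |S \cap \Gamma|$ and $\delta' = \delta m / m'$ in the lemma; this preserves the poverty threshold $\delta' m'/q + 1 = r$, and a short calculation using $(m' - \delta m)^2 \ge m m'(1-\delta)^2$ confirms $m'(1-\delta')^2 \ge m(1-\delta)^2$, so the lemma's bound degrades monotonically. Combined with the hypothesis $m \ge 2^{n+3-k} q(1-\delta)^{-2}$, this yields at most $2^{k-n-3}\, q \binom{k}{1}_q$ poor $(k-1)$-flats per $\Gamma$. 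Summing over $F$ and applying the $q$-binomial identity $\binom{n}{k}_q \binom{k}{1}_q = \binom{n}{k-1}_q \binom{n-k+1}{1}_q$, the number $|T_{\mathrm{poor}}|$ of poor elements of $T$ is at most $2^{k-n-3}\, q \binom{n-k+1}{1}_q \binom{n}{k-1}_q$.

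To finish, I would use the elementary inequality $q \binom{n-k+1}{1}_q < 2\, q^{n-k+1}$ (valid for all $q \ge 2$) to obtain $|T_{\mathrm{poor}}| < 2^{k-n-2}\, q^{n-k+1} \binom{n}{k-1}_q$, which is strictly less than half of the lower bound on $|T|$. Hence $|G_r| \ge |T| - |T_{\mathrm{poor}}| > 2^{k-2-n}\, q^{n-k+1} \binom{n}{k-1}_q$, as desired.
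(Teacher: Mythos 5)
Your proof is correct and follows essentially the same route as the paper: Lemma \ref{th:kakeyaForFlats} with $\ell=k-1$ (combined with \eqref{eq:kakeya}) supplies the large collection of $(k-1)$-flats, Lemma \ref{th:becks} bounds the poor ones inside each witness $k$-flat, and a subtraction finishes. Your substitution $\delta' = \delta m/m'$ to handle $|S\cap\Gamma| = m' > m$ is valid but slightly more elaborate than necessary --- applying Lemma \ref{th:becks} with the original $\delta$ and $m'$ already yields a threshold $\delta m' q^{-1}+1 \ge r$ (so every $r$-poor flat is counted) together with an upper bound that only shrinks as $m'$ grows.
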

	\begin{proof}
	    Let $F$ be a set of $k$-flats that each intersect $S$ in at least $m$ points, such that, for each rank $k$ subspace, there exists a flat of $F$ parallel to it.
	    
		By Lemma \ref{th:kakeyaForFlats} and the Kakeya bound (\ref{eq:kakeya}), there is a set $G$ of $(k-1)$-flats contained in the flats of $F$ with 
		\[|G| \geq  K(q,n-k+1,1,q) \binom{n}{k-1}_q \geq 2^{k-1-n} q^{n-k+1} \binom{n}{k-1}_q.\]
		
		Let $G_p \subseteq G$ be those flats of $G$ that are $(S,r)$-poor.
		We will show that $|G_p| < 2^{-1}|G|$, which implies the conclusion of the lemma.
		
		Applying Lemma \ref{th:becks}, the number of $(S,r)$-poor $(k-1)$-flats contained in any given $k$-flat is at most $(1+mq^{-1}(1-\delta)^2)^{-1}q(1-q^k)(1-q)^{-1}$.
		Since $m \geq 2^{n+3-k}q(1-\delta)^{-2}$, we have \[(1+mq^{-1}(1-\delta)^2)^{-1} < 2^{k-3-n}.\]
		
		Summing over the flats of $F$ and using the exact expression (\ref{eq:exactQBinom}) for $q$-binomial coefficients,
		\begin{align*}
		|G_p| &\leq  |F| (1+mq^{-1}(1-\delta)^2)^{-1} \frac{1-q^k}{1-q} q 
		\\ &< 2^{k-3-n} |F| \frac{1-q^k}{1-q} q \\
		&= 2^{k-3-n} \binom{n}{k}_q \frac{1-q^k}{1-q} q \\
		&= 2^{k-3-n} \binom{n}{k-1}_q \frac{1-q^{n-k+1}}{1-q} q \\
		&< 2^{k-2-n} \binom{n}{k-1}_q q^{n-k+1} \\
		&\leq 2^{-1}|G|,
		\end{align*}
		as claimed.
	\end{proof}
	
	The next lemma is essentially a reformulation of Lemma \ref{th:incidenceBound}.

	\begin{lemma}\label{th:heavyFlatsCoverManyPoints}
		Let $P \subseteq \F_q^n$ be a set of points.
		Let $\delta, \gamma > 0$, and let $L$ be a set of $\ell$-flats that each contain at least $\delta q^\ell$ points of $P$, and suppose that $|L| = \gamma q^{n-\ell} \binom{n}{\ell}_q$.
		Let $\kappa = \gamma q^\ell$.
		Then,
		\[|P| \geq \left ( \delta \kappa (\kappa+1)^{-1} - \sqrt{\delta(1-\delta) \kappa^{-1}} \right ) q^n. \]
	\end{lemma}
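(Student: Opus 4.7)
The plan is to apply a Haemers/expander-mixing style incidence bound to the point--flat bipartite graph, combine it with the hypothesis $I(P,L) \ge \delta q^\ell |L|$, and then solve a resulting quadratic inequality for $|P|$. To produce the $(1-\delta)$ factor that appears inside the square root in the conclusion, I will need the sharper two-parameter form of the expander-mixing lemma, not just the version recorded as Lemma~\ref{th:incidenceBound}.

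Specifically, I would first invoke the sharper form
$$\bigl|I(P,L) - q^{\ell-n}|P||L|\bigr| \le \sqrt{q^\ell \binom{n-1}{\ell}_q\, |P|\bigl(1-|P|/q^n\bigr)\, |L|\bigl(1-|L|/F\bigr)},$$
where $F = q^{n-\ell}\binom{n}{\ell}_q$ is the total number of $\ell$-flats. Although Lemma~\ref{th:incidenceBound} only records the one-sided version without the $(1-|P|/q^n)(1-|L|/F)$ correction, this refinement follows from the same eigenvalue computation for biregular bipartite graphs. Combining with $I(P,L) \ge \delta q^\ell|L|$, dividing through by $q^\ell|L|$, and simplifying using $\binom{n-1}{\ell}_q/\binom{n}{\ell}_q \le q^{-\ell}$, $|L|/F = \gamma$, $\kappa = \gamma q^\ell$, and the crude $1 - |L|/F \le 1$, I would reduce everything to the single clean inequality
$$\delta - x \le \sqrt{x(1-x)/\kappa}, \qquad x := |P|/q^n.$$

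Next, if $x > \delta$ the claim is immediate since the lemma's RHS is at most $\delta$. Otherwise, squaring and rearranging yields the quadratic
$$(\kappa+1)x^2 - (2\kappa\delta + 1)x + \kappa\delta^2 \le 0,$$
whose discriminant simplifies cleanly to $4\kappa\delta(1-\delta)+1$. Reading off the smaller root and applying the elementary bounds $\sqrt{a+b} \le \sqrt{a} + \sqrt{b}$ (with $a = 4\kappa\delta(1-\delta)$, $b = 1$) and $\sqrt{\kappa}/(\kappa+1) \le 1/\sqrt{\kappa}$, I would conclude that this root is at least $\delta\kappa/(\kappa+1) - \sqrt{\delta(1-\delta)/\kappa}$, matching the statement exactly.

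The main obstacle is step one: having the sharper two-parameter incidence bound with the $(1-|P|/q^n)$ factor in place. Without that factor, carrying out the same program with the plain Lemma~\ref{th:incidenceBound} yields only a bound of shape $x \ge \delta - O(\sqrt{\delta/\kappa})$, missing the $\sqrt{1-\delta}$ savings. One can also try to recover the factor by applying Lemma~\ref{th:incidenceBound} simultaneously to $P$ and to its complement $\bar P$ (using $I(\bar P, L) \le (1-\delta)q^\ell|L|$), but this only yields $\min(x,1-x)/\kappa$ on the right-hand side, not the $x(1-x)/\kappa$ needed to hit the quadratic cleanly. So the crux really is to use (or derive) the sharper Haemers bound; once that is pinned down, the rest is routine algebraic manipulation of a univariate quadratic.
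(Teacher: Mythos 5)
Your proposal is correct and follows essentially the same route as the paper: apply the (sharpened) incidence bound, divide through, arrive at the quadratic $(\kappa+1)x^2-(2\kappa\delta+1)x+\kappa\delta^2\le 0$ in $x=|P|/q^n$, and bound the smaller root from below via $\sqrt{4\kappa\delta(1-\delta)+1}\le 2\sqrt{\kappa\delta(1-\delta)}+1$ and $\sqrt{\kappa}/(\kappa+1)\le 1/\sqrt{\kappa}$. The ``obstacle'' you flagged is a genuine and sharp observation: the paper's proof invokes Lemma~\ref{th:incidenceBound} but actually uses the stronger form with the extra $\bigl(1-q^{-n}|P|\bigr)$ factor inside the square root, which the stated lemma does not contain; as you say, this strengthening does follow from the same eigenvalue computation for the biregular point--flat graph (Haemers/expander mixing), so your proof and the paper's are on identical footing, and you are a bit more explicit about the $x>\delta$ degenerate case, which the paper handles only implicitly by writing $\eps=|P|q^{-n}<\delta$.
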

	\begin{proof}
		Let $\eps = |P| q^{-n}$.
		If $\delta \leq \eps$, then $|P| \geq \delta q^n$, which is stronger than the conclusion of the lemma.
		Hence, we may assume that $\eps < \delta$.
		
		Since each flat of $L$ contains at least $\delta q^\ell$ points of $P$, it follows that $I(P,L) \geq \delta q^\ell |L|$.
		By Lemma \ref{th:incidenceBound},
		\[\delta q^\ell |L| \leq \eps q^\ell |L| + \sqrt{q^\ell \binom{n-1}{\ell}_q |P| \, |L| \left (1 - q^{-n}|P| \right )}. \]	
		Rearranging,
		\[(\delta - \eps)^2 q^\ell |L| \leq \eps q^n (1-\eps) \binom{n-1}{\ell}_q. \]
		Since $\binom{n}{\ell}_q > q^\ell \binom{n-1}{\ell}_q$, applying the hypothesis on $|L|$ gives
		\begin{equation}\label{eq:quad}(\delta - \eps)^2 q^\ell \gamma - \eps(1-\eps) < 0. \end{equation}
		Since the coefficient of $\eps^2$ in (\ref{eq:quad}) is positive, $\eps$ must be greater than the smaller root of (\ref{eq:quad}).
		Hence,
		\begin{align*}
		\eps &> \frac{1 + 2 \delta \kappa - \sqrt{(2\delta \kappa + 1)^2 - 4(\kappa + 1) \delta^2 \kappa}}{2(\kappa + 1)} \\
		&= \frac{1 + 2 \delta \kappa - \sqrt{1 + 4 \delta \kappa (1 - \delta)}}{2(\kappa + 1)}\\
		&> \frac{\delta \kappa - \sqrt{\delta \kappa (1 - \delta)}}{\kappa + 1} \\
		&> \delta \kappa (\kappa+1)^{-1} - \sqrt{\delta(1-\delta) \kappa^{-1}}.
		\end{align*}
	\end{proof}

    We are now ready to prove Theorems \ref{th:largeMFurst} and \ref{th:pureIncidences}.
    
	\begin{proof}[Proof of Theorem \ref{th:pureIncidences}]
	    Applying Lemma \ref{th:heavyFlatsCoverManyPoints} with $\delta = mq^{-k}$ and $\gamma=q^{k-n}$ yields
	    \begin{align*}
	    |S| &\geq mq^{n-k} \left( 1 - (q^{2k-n} +1)^{-1} - \sqrt{(1-mq^{-k})q^{n-k}m^{-1}} \right) \\
	    &\geq mq^{n-k} \left( 1 - q^{n-2k} - q^{2n-4k} - \sqrt{q^{n-k}m^{-1}} + \sqrt{q^{n-2k}} \right).
	    \end{align*}
	    
	   The assumption that $k > n/2$ implies that $q^{(n-2k)/2} > q^{2(n-2k)}$, hence
	   \[ |S| \geq mq^{n-k} \left( 1 - q^{n-2k} - \sqrt{q^{n-k}m^{-1}}\right).\]
	   
	\end{proof}
	
	\begin{proof}[Proof of Theorem \ref{th:largeMFurst}]
	    Let $S$ be a $(k,m)$-Furstenberg set in $\mathbb{F}_q^n$ with $2 \leq k < m$ and $2^{n+7-k}q\eps^{-2} m \leq q^k$.
	    We show that $|S| \geq (1-\eps)mq^{n-k}$.
	    
	    Apply Lemma \ref{th:kakeyaBecks} to $S$ with $\delta = 1-\eps/4$.
	    This gives a set $G_r$ of $(k-1)$-flats, each incident to more than $(1-\eps/4)mq^{-1}$ points of $S$, with $|G_r| > 2^{k-2-n}q^{n-k+1}\binom{n}{k-1}_q$.
		
		Next apply Lemma \ref{th:heavyFlatsCoverManyPoints} to $G_r$ with $\delta = (1-\eps/4)mq^{-k}$, $\ell = k-1$, and $\gamma = 2^{k-2-n}$.
		As in Lemma \ref{th:heavyFlatsCoverManyPoints}, let $\kappa = \gamma q^{k-1}$.
		Note that $q^k \geq m \geq 2^{n+7-k} q \eps^{-2}$, and hence 
		\begin{align*} \kappa (1 + \kappa)^{-1} &\geq 1- \eps^2 2^{-5} > 1-\eps/4, \text{ and} \\ 
		\kappa^{-1} &\leq 2^{-5} \eps^2.
		\end{align*}
		Thus we have
		\begin{align*}
			|S|q^{-n} &\geq \delta \kappa (\kappa+1)^{-1} - \sqrt{\delta(1-\delta) \kappa^{-1}} \\
			&> \delta (1-\eps/4) - \sqrt{\delta} (\eps / 4) \\
			&> \delta(1-\eps/2) \\
			&= (1 - \eps/4)(1-\eps/2)mq^{-k} \\
			&> (1-\eps)mq^{-k}.
		\end{align*}
	\end{proof}


\section*{Acknowledgments} 
The authors are grateful to the anonymous reviewer for numerous helpful comments.

\bibliographystyle{amsplain}


\begin{dajauthors}
\begin{authorinfo}[dhar]
  Manik Dhar\\
  Department of Computer Science\\ Princeton University\\
  Princeton, New Jersey, USA\\
  \texttt{manikd@princeton.edu}
\end{authorinfo}
\begin{authorinfo}[dvir]
  Zeev Dvir\\
  Department of Computer Science and Department of Mathematics\\ 
  Princeton University\\
  Princeton, New Jersey, USA\\
  \texttt{zeev.dvir@gmail.com}
\end{authorinfo}
\begin{authorinfo}[lund]
  Ben Lund\\
  Discrete Mathematics Group\\
  Institute for Basic Science\\
  Daejeon, South Korea\\
  \texttt{lund.ben@gmail.com}
\end{authorinfo}
\end{dajauthors}

\end{document}